\theoremstyle{theorem}
\newtheorem{thm}{Theorem}
\newtheorem{lemma}[thm]{Lemma}
\newtheorem{prop}[thm]{Proposition}
\newtheorem{algm}{Algorithm}
\theoremstyle{remark}
\newtheorem{assumption}{Assumption}
\newtheorem{rmk}{Remark}
\theoremstyle{definition}
\newtheorem{defn}{Definition}
\newtheorem{example}{Example}
\newcommand\omicron{o}
\newcommand{\I}[1]{\mathbbm 1_{\{#1\}}}
\newcommand{\E}{\mathbb{E}}
\renewcommand{\P}{\mathbb{P}}
\title{Importance sampling of heavy-tailed iterated random functions}
\author{
        By Bohan Chen, Chang-Han Rhee \& Bert Zwart \\\\
        Centrum  Wiskunde \& Informatica\\
        Science Park 123, 1098 XG Amsterdam, Netherlands
}
\date{\today}
\begin{document}
\maketitle

\begin{abstract}
We consider a stochastic recurrence equation of the form $Z_{n+1} = A_{n+1} Z_n+B_{n+1}$, where $\mathbb{E}[\log A_1]<0$, $\mathbb{E}[\log^+ B_1]<\infty$ and $\{(A_n,B_n)\}_{n\in\mathbb{N}}$ is an i.i.d.\ sequence of positive random vectors. The stationary distribution of this Markov chain can be represented as the distribution of the random variable $Z \triangleq  \sum_{n=0}^\infty B_{n+1}\prod_{k=1}^nA_k$. Such random variables can be found in the analysis of probabilistic algorithms or financial mathematics, where $Z$ would be called a stochastic perpetuity. If one interprets $-\log A_n$ as the interest rate at time $n$, then $Z$ is the present value of a bond that generates $B_n$ unit of money at each time point $n$. We are interested in estimating the probability of the rare event $\{Z>x\}$, when $x$ is large; we provide a consistent simulation estimator using state-dependent importance sampling for the case, where $\log A_1$ is heavy-tailed and the so-called Cram\'{e}r condition is not satisfied. Our algorithm leads to an estimator for $P(Z>x)$. We show that under natural conditions, our estimator is strongly efficient. Furthermore, we extend our method to the case, where $\{Z_n\}_{n\in\mathbb{N}}$ is defined via the recursive formula $Z_{n+1}=\Psi_{n+1}(Z_n)$ and $\{\Psi_n\}_{n\in\mathbb{N}}$ is a sequence of i.i.d.\ random Lipschitz functions.
\end{abstract}

\section{Introduction}\label{sec1}
We consider an $\mathbb{R}$-valued Markov chain $\{Z_n\}_{n\in\mathbb{N}}$ defined by
\begin{equation}\label{sec1eq1}
Z_{n+1} = \Psi_{n+1}(Z_n),
\end{equation}
where $\{\Psi_n\}_{n\in\mathbb{N}}$ is a sequence of i.i.d.\ positive random Lipschitz functions and $Z_0\in\mathbb{R}$ is arbitrary but independent of the sequence $\{\Psi_n\}_{n\in\mathbb{N}}$. For $k<n$, define the backward iteration as
$$\Psi_{k:n}(z) \triangleq  \Psi_{k}\circ\Psi_{k+1}\circ\cdots\circ\Psi_{n}(z).$$
Define $Z^{(n)}(z_0)\triangleq \Psi_{1:n}(z_0)$. Under some mild conditions (cf.\ Assumption \ref{sec3ass2} below), the sequence $\{Z^{(n)}(z_0)\}_{n\in\mathbb{N}}$ converges a.s.\ to some random variable $Z$. Moreover, this limit does not depend on the choice of the initial condition $z_0$ (cf.\ \citeNP[Theorem 3.1]{dyszewski2016}) and has the same distribution as the stationary solution to \eqref{sec1eq1}. For simplicity we set
\begin{equation}\label{sec1eq5}
Z^{(n)} \triangleq  Z^{(n)}(0).
\end{equation}
We assume that $\Psi_n$ is such that $Z^{(n)}$ is increasing in $n$. Define $T(x)=\inf\{n\geq0:\,Z^{(n)}>x\}$. This paper develops efficient simulation methods for estimating the tail probability of $Z$, i.e.\ we are interested in computing
$$\mathbb{P}(Z>x)=\mathbb{P}(T(x)<\infty),$$
when $x$ is large.

The main example we have in mind, is the so called stochastic perpetuity (also known as infinite horizon discounted reward). More precisely, we consider the random difference equation, where $\Psi_n$ is an affine transformation, that is $\Psi_{n}(z) = A_n z+B_n$. The formula \eqref{sec1eq1} can be written as
\begin{equation}\label{sec1eq2}
Z_{n+1}=A_{n+1}Z_n+B_{n+1},
\end{equation}
for $n\in\mathbb{N}$. It is well known that if $\mathbb{E}[\log A_1]<0$ and $\mathbb{E}[\log^+ B_1]<\infty$, then the Markov chain given by \eqref{sec1eq2} has a unique stationary distribution, which has the same distribution as the random variable
$$Z \triangleq  \sum_{n=0}^\infty B_{n+1} e^{S_n},$$
where $S_n\triangleq S_{n-1}+X_n$, $S_0\triangleq 0$ and $X_n\triangleq \log A_n$. Such random variables can be found in the analysis of probabilistic algorithms or financial mathematics, where $Z$ would be called a stochastic perpetuity. If one interprets $-\log A_n$ as the interest rate at time $n$, then $Z$ is the present value of a bond that generates $B_n$ unit of money at each time point $n$. Perpetuities also occur in the context of ruin problems with investments, in the study of financial time series such as ARCH-type processes (cf.\ e.g.\ \citeNP{embrechtsklueppelbergmikosch1997}), in tail asymptotics for exponential functionals of L\'{e}vy processes (see e.g.\ \citeNP{maulikzwart2006}), etc. A book devoted to \eqref{sec1eq2} is \citeA{buraczewskidamekmikosch2016}. Although some particular cases exist that allow for an explicit analysis (see e.g.\ \citeNP{vervaat1979}), it is hard to come up with exact results for the distribution of $Z$ in general. Thus Monte Carlo simulation arises as a natural approach to deal with the analysis of stochastic perpetuities, including the large deviations regime where $x$ in $\mathbb{P}(T(x)<\infty)$ is large, which is the focus of this paper.

In this paper we develop a state-dependent importance sampling algorithm that can be proved to be strongly efficient. By state-dependent, we mean that the importance sampling distribution for generating $Z^{(n+1)}$ is dependent on the current state $Z^{(n)}$. We say that an estimator $L(x)$ for $\mathbb{P}(T(x)<\infty)$ is strongly efficient (for a discussion of efficiency in rare-event simulation, see e.g.\ \citeNP{asmussenglynn2007}) if
\begin{equation}\label{sec1eq3}
\sup_{x>1}\frac{\mathbb{E}L^2(x)}{\mathbb{P}(T(x)<\infty)^2}<\infty.
\end{equation}

To explain the idea behind our algorithm, consider a stochastic perpetuity, where $B_n=1$. One difficulty that arises in our setting---where $\log A_1$ is heavy-tailed---is that the  Cram\'{e}r condition is not satisfied (a study of the Cram\'{e}r case can be found in \citeNP{blanchetlamzwart2012}), and hence, standard techniques such as exponential change of measure cannot be used. The algorithm we provide in the present paper is based on the fact that the stochastic perpetuity is closely related to the maximum of a random walk. More precisely, for $\gamma\in(0,-\mathbb{E}X_1)$ we observe that
\begin{equation}\label{sec1eq4}
Z = \sum_{n=0}^\infty \exp(S_n)=\sum_{n=0}^\infty \exp\{S_n+n\gamma\}\exp(-n\gamma)\leq \exp\left\{\max\limits_{n\geq0}\left(S_n+n\gamma\right)\right\}\frac{1}{1-e^{-\gamma}}.
\end{equation}
For a general sequence $\{\Psi_n\}_{n\in\mathbb{N}}$, we construct a slightly more involved upper bounding random walk and use it to construct a coupling. This allows us to leverage an importance sampling algorithm designed for random walks, in \citeA{blanchetglynn2008}. We can extend this idea to a general Markov chain given by \eqref{sec1eq1}. The tail asymptotics in this case have been derived by \citeA{dyszewski2016}. Our extension of \eqref{sec1eq4} leads to a shorter proof of the asymptotic upper bound given in that paper.

Note that $Z$ is defined over an infinite horizon, and hence, requires an infinite amount of computational effort for generating each sample; a natural approach to address such an issue is to work with approximations by finite-time truncation. In this paper we study the bias introduced by such approximations and show that our estimator has a vanishing relative bias as $x\to\infty$. We also study the asymptotic behavior of the bias with respect to the truncation time. We show that the relative bias converges to $0$ at a polynomial rate, which depends on the moment condition of $\log A_1$. It should be mentioned that such a convergence rate is due to the heavy-tailed nature of $\log A_1$; in case Cram\'{e}r condition holds for $\log A_1$, geometric convergence rates typically ensue, i.e, the relative bias converges exponentially to $0$ (cf.\ e.g.\ \citeNP[Theorem 2.8]{basrakdavismikosch2002}). By identifying such a convergence rate and proving a uniform bound on the relative moment of the associated estimator (slightly stronger statement than the strong efficiency proved in \citeNP{blanchetglynn2008}), we show that one can apply the bias elimination technique studied in \citeA{rheeglynn2015} to construct strongly efficient and unbiased estimators. 

The rest of the paper is organized as follows. In Section 2 we briefly review an efficient simulation algorithm for the maximum of heavy-tailed random walks proposed by \citeA{blanchetglynn2008}. In Section 3 we introduce the random objects that can be handled by our algorithm. In Section 4 and 5 we use the result from Section 3 to prove an asymptotic upper bound of $\mathbb{P}(Z>x)$. Moreover, we construct an efficient algorithm for estimating $\mathbb{P}(Z>x)$ and analyze its relative bias. The main result of this paper is given as Theorem \ref{sec4thm1}. In Section 6 we analyze the asymptotic behavior of the bias with respect to the truncation time, based on a simple example; we apply the method studied in \citeA{rheeglynn2015} to obtain an unbiased estimator. In Section 7 we present our computational results.

\section{Notations and Preliminary Results}\label{sec2}
In this section we will first introduce several notations, then we will recall some well known preliminary results.

Let $x^+=\max(x,0)$ denote the positive part of $x$ and let $\log^+(x)=\max(\log x,0)=\log\left(\max\left(x,1\right)\right)$. We first recall the following lemma, which will be very useful in validating our new estimator.
\begin{lemma}[\citeNP{glynn2012}]\label{sec2lem1}
Let $\{Y_n\}_{n\in\mathbb{N}}$ be a sequence of random variables on the probability space $(\Omega,\mathcal{F},\mathbb{P})$. Let $\{M_n\}_{n\in\mathbb{N}}$ be a non-negative martingale that is adapted to $\{Y_n\}_{n\in\mathbb{N}}$ for which $\mathbb{E}M_0=1$. Let $T$ be a stopping time adapted to $\{Y_n\}_{n\in\mathbb{N}}$. Define a sequence of probability measures as $\mathbb{P}_n(A^\prime)=\mathbb{E}\mathbbm{1}_{A^\prime} M_n$, for $A^\prime\in\mathcal{F}$.
Then there exists a probability measure $\tilde{\mathbb{P}}$, such that $\tilde{\mathbb{P}}(A^{\prime})=\mathbb{P}_n(A^{\prime})$, for $A^\prime\in\mathcal{F}$ and $n\in\mathbb{N}$.
Furthermore, we have that $\mathbb{E}\mathbbm{1}_{\{T<\infty\}}=\tilde{\mathbb{E}}\mathbbm{1}_{\{T<\infty\}}M_T^{-1}$.
\end{lemma}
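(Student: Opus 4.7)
The plan is to set up the natural filtration $\mathcal{F}_n=\sigma(Y_0,\ldots,Y_n)$ and $\mathcal{F}_\infty=\sigma(\cup_n\mathcal{F}_n)$, interpret $\mathbb{P}_n$ as a probability measure on $(\Omega,\mathcal{F}_n)$ with Radon--Nikodym density $M_n$ relative to $\mathbb{P}$, and construct $\tilde{\mathbb{P}}$ as the extension of the family $(\mathbb{P}_n)_{n\in\mathbb{N}}$. The first thing I would check is consistency of this family: for $m\le n$ and $A'\in\mathcal{F}_m$, the martingale property combined with the tower rule yields
\begin{equation*}
\mathbb{P}_n(A')=\mathbb{E}\bigl[\mathbbm 1_{A'} M_n\bigr]=\mathbb{E}\bigl[\mathbbm 1_{A'}\mathbb{E}[M_n\mid\mathcal{F}_m]\bigr]=\mathbb{E}\bigl[\mathbbm 1_{A'} M_m\bigr]=\mathbb{P}_m(A'),
\end{equation*}
so $\mathbb{P}_n$ extends $\mathbb{P}_m$. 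Once consistency is in place, I would invoke Kolmogorov's extension theorem (or Ionescu--Tulcea, if one prefers to work with the transition kernels induced by the $M_n/M_{n-1}$ factors) to obtain a unique probability measure $\tilde{\mathbb{P}}$ on $(\Omega,\mathcal{F}_\infty)$ whose restriction to each $\mathcal{F}_n$ coincides with $\mathbb{P}_n$.

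For the identity involving the stopping time, the plan is to decompose by the value of $T$. Since $\tilde{\mathbb{P}}|_{\mathcal{F}_n}$ has density $M_n$ with respect to $\mathbb{P}|_{\mathcal{F}_n}$, and since $\tilde{\mathbb{P}}(M_n=0)=\mathbb{E}[\mathbbm 1_{\{M_n=0\}}M_n]=0$, any $\mathcal{F}_n$-measurable bounded $X$ satisfies $\mathbb{E}[X\,\mathbbm 1_{\{M_n>0\}}]=\tilde{\mathbb{E}}[X/M_n]$ under the convention $1/0:=0$. Applied to $X=\mathbbm 1_{\{T=n\}}$, which is $\mathcal{F}_n$-measurable because $T$ is a stopping time, and using that $M_T=M_n$ on $\{T=n\}$, this gives
\begin{equation*}
\mathbb{E}\bigl[\mathbbm 1_{\{T=n\}}\bigr]=\tilde{\mathbb{E}}\bigl[\mathbbm 1_{\{T=n\}} M_T^{-1}\bigr],
\end{equation*}
provided $\{M_T=0,T<\infty\}$ is $\mathbb{P}$-null (the standard implicit hypothesis that makes $M_T^{-1}$ well defined on $\{T<\infty\}$). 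Summing over $n\in\mathbb{N}$ and applying monotone convergence on the right-hand side yields the claimed identity $\mathbb{E}\mathbbm 1_{\{T<\infty\}}=\tilde{\mathbb{E}}\mathbbm 1_{\{T<\infty\}}M_T^{-1}$.

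The main obstacle is the extension step: the consistency calculation is a one-line martingale computation, but producing a bona fide probability measure $\tilde{\mathbb{P}}$ on $\mathcal{F}_\infty$ that simultaneously realizes every $\mathbb{P}_n$ genuinely requires an extension theorem and hence some structural assumption on $(\Omega,\mathcal{F})$ (a Polish state space for the $Y_n$, or a Ionescu--Tulcea construction on the canonical path space). The remaining subtlety is purely bookkeeping, namely handling the null set on which $M_n=0$ carefully enough that the change-of-measure identity passes cleanly to the (random) time $T$.
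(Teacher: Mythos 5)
The paper does not prove this lemma; it cites it directly from Glynn (2012) and uses it as a black box, so there is no in-paper proof to compare against. Evaluated on its own terms, your argument is the standard one for this statement and is essentially correct: the consistency calculation via the tower property is right, the appeal to Ionescu--Tulcea or Kolmogorov extension on $\mathcal{F}_\infty$ is the appropriate device (and you are right that the paper's phrasing ``$\tilde{\mathbb{P}}(A')=\mathbb{P}_n(A')$ for $A'\in\mathcal{F}$'' must be read as $A'\in\mathcal{F}_n$, since the $\mathbb{P}_n$ do not coincide on all of $\mathcal{F}$), and the decomposition of $\{T<\infty\}$ into $\{T=n\}$ together with $\tilde{\mathbb{P}}(M_n=0)=0$ and monotone convergence is exactly how the stopping-time identity is usually derived. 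You also correctly isolate the two places where care is needed: the topological hypothesis required for the extension step, and the implicit assumption that $\{T<\infty,\,M_T=0\}$ is $\mathbb{P}$-null --- without the latter, the right-hand side only recovers $\mathbb{E}[\mathbbm{1}_{\{T<\infty,\,M_T>0\}}]$. In the importance-sampling context of this paper that hypothesis is automatic because the likelihood ratios $w/v$ never vanish, so both caveats are harmless here.
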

Our goal is to find a suitable martingale $M_n$ such that the strong efficiency criterion in \eqref{sec1eq3} is satisfied. Here we consider first a useful example proposed by \citeA{blanchetglynn2008}, where the authors develop an efficient state-dependent importance sampling strategy for estimating the tail probability of a random walk crossing a certain level. Before we go through the details of the example, we introduce the following definition.
\begin{defn}
A random variable $X$ is said to posses a long tail, if for every $c\in\mathbb{R}$, we have that $\mathbb{P}(X>t+c)\sim\mathbb{P}(X>t)$ as $t\to\infty$. $X$ is called subexponential if $\mathbb{P}(X_1^+ +X_2^+>t)\sim2\mathbb{P}(X^+>t)$ as $t\to\infty$, where $X_1$ and $X_2$ are independent copies of $X$. Moreover, $X$ is said to belong to the family $S^*$ if the following holds
$$2\mathbb{E}X^+\mathbb{P}(X>t)\sim\int_0^t\mathbb{P}(X>t-s)\mathbb{P}(X>s)ds$$
as $t\to\infty$.
\end{defn}
If $X_1$ belongs to $S^*$, then both the distribution of $X_1$ and its integrated tail are subexponential (cf.\ \citeNP[Theorem 3.2]{klueppelberg1988}) and, in particular, long tailed. Furthermore, the Pakes-Veraverbeke's theorem (cf.\ e.g.\ \citeNP{veraverbeke1977} and \citeNP{zachary2004}) says
\begin{equation}\label{sec2eq5}
\mathbb{P}\left(\max\limits_{n\geq0} S_n>x\right)\sim-\frac{1}{\mathbb{E}X_1}\int_x^\infty \mathbb{P}(X_1>t)dt,
\end{equation}
as $x\to\infty$, where $S_n=S_{n-1}+X_n$.
\begin{example}\label{sec2ex1}
Consider a random walk $\{S_n\}_{n\in\mathbb{N}}$ generated by a sequence of i.i.d.\ random variables $\{X_n\}_{n\in\mathbb{N}}$, i.e, $S_n=S_{n-1}+X_n$, $S_0=0$. Assume that $\mathbb{E}X_1<0$ and $X_1$ belongs to $S^*$. We are interested in estimating $\mathbb{P}\left(\tau(x)<\infty\right)=\mathbb{P}(\max_{n\geq0}S_n>x)$, where $\tau(x)=\inf\{n\geq0:S_n>x\}$.
Let $v(z)$ be a positive function on $(-\infty,x)$, let $P(y,dz)$ denote the transition kernel of the random walk. Instead of $P(y,dz)$, one can simulate the random walk via another transition kernel
\begin{equation}\label{sec2eq7}
Q(y,dz)=P(y,dz)\frac{v(z)}{w(y)},\quad\forall y\in(-\infty,x],\text{\ }z\in\mathbb{R},
\end{equation}
where $w(y)$ is the normalization constant that is given by $w(y)=\int_{\mathbb{R}}v(z)P(y,dz)$. Choosing
$$M_n^{-1}=\prod_{k=1}^n\frac{w(S_{k-1})}{v(S_k)}$$
and applying Lemma \ref{sec2lem1}, this yields a potential candidate of the estimator, which has the form $L(x)=\mathbbm{1}_{\{\tau(x)<\infty\}}M_{\tau(x)}^{-1}$. It is ``a potential candidate'' because for each $a\leq0$, any $M_n^{-1}$ constructed by the pair $w(\cdot+a)$ and $v(\cdot+a)$ is also a possible choice. Define a non-negative random variable $W$ that is independent of $\{X_n\}_{n\in\mathbb{N}}$ with tail probability
\begin{equation}\label{sec2eq4}
\mathbb{P}(W>t) \triangleq  \min\left[1,-\frac{1}{\mathbb{E}X_1}\int_t^\infty\mathbb{P}(X_1>s)ds\right].
\end{equation}
\citeA{blanchetglynn2008} propose to choose
\begin{equation}\label{sec2eq1}
v(z) \triangleq  \mathbb{P}(W>-(z-x)),
\end{equation}
and
\begin{equation}\label{sec2eq2}
w(y) \triangleq  \mathbb{P}(X_1+W>-(y-x)).
\end{equation}
From \eqref{sec2eq5} we can see that the choice above of $v(\cdot)$ and $w(\cdot)$ leads to a good approximation of the so-called zero-variance importance distribution, which involves sampling from the conditional distribution of the random walk given $\{\tau(x)<\infty\}$ (cf.\ \citeNP[Theorem 1]{blanchetglynn2008}). By showing
\begin{equation}\label{sec2eq6}
w(y)-v(y)=\omicron(\,\mathbb{P}(X_1>-y)\,),\quad \text{as\ }y\to-\infty, 
\end{equation}
(for details see \citeNP[Proposition 3]{blanchetglynn2008}) and, for each $\delta\in(0,1)$, the existence of a constant $a_*=a_*(\delta)\in(-\infty,0]$ such that
\begin{equation}\label{sec2eq3}
-\delta\leq\frac{v^2(y)-w^2(y)}{\mathbb{P}(X_1>-y)w(y)},\quad\forall y\leq x+a_*,
\end{equation}
the authors were able to control the second moment of the estimator via a Lyapunov bound (for details see \citeNP[Theorem 2, Proposition 2, Proposition 3]{blanchetglynn2008}). We summarize one of their results in the next theorem, which will prove to be useful in  our context.
\end{example}
\begin{thm}\cite[Theorem 3]{blanchetglynn2008}\label{sec2thm1}
Suppose that $\mathbb{E}X_1<0$ and $X_1$ belongs to $S^*$. Let $v$ and $w$ be defined as in \eqref{sec2eq1} and \eqref{sec2eq2}. For fixed $\delta\in(0,1)$, set $a_*=a_*(\delta)\leq0$ satisfying \eqref{sec2eq3}. Let an unbiased estimator of $\mathbbm{P}(\max_{n\in\mathbbm{N}}S_n>x)$ be given by
$$L_\tau(x)=\mathbbm{1}_{\{\tau(x)<\infty\}}\prod_{k=1}^{\tau(x)}\frac{w(S_{k-1}+a_*)}{v(S_k+a_*)}.$$
Then
$$\sup_{x>0}\frac{\mathbb{E}^{Q_{a_*}}L_\tau^2(x)}{\mathbb{P}\left(\max\limits_{n\geq0}S_n>x\right)^2}<\infty,$$
where $\mathbb{E}^{Q_{a_*}}$ denotes the expectation w.r.t.\ the random process $\{S_n\}_{n\in\mathbb{N}}$ having a one-step transition kernel
\begin{align*}
Q_{a_*}(y,dz)&=P(y,dz)\frac{v(z+a_*)}{w(y+a_*)}\\
&=\frac{\mathbb{P}(y+X_1\in dz)v(z+a_*)}{w(y+a_*)}\\
&=\mathbb{P}(y+X_1\in dz\,|\,X_1+W>-(y-x)-a_*).
\end{align*}
\end{thm}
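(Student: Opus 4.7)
My plan is to follow the standard two-step recipe for strong efficiency of a state-dependent importance sampler: establish unbiasedness from the martingale structure via Lemma~\ref{sec2lem1}, and then prove a Lyapunov-type bound on the second moment that matches $\mathbb{P}(\tau(x)<\infty)^2$ through the Pakes--Veraverbeke asymptotics \eqref{sec2eq5}.

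For unbiasedness I would set $M_0=1$ and $M_n = \prod_{k=1}^{n} v(S_k+a_*)/w(S_{k-1}+a_*)$. The definition \eqref{sec2eq2} of $w$ gives $\mathbb{E}[v(S_k+a_*)\mid S_{k-1}] = w(S_{k-1}+a_*)$, so $\{M_n\}$ is a nonnegative, mean-one martingale. Lemma~\ref{sec2lem1} then furnishes a measure under which the chain is Markov with the displayed transition kernel $Q_{a_*}$, and produces $\mathbb{P}(\tau(x)<\infty) = \mathbb{E}^{Q_{a_*}}[\mathbbm{1}_{\{\tau(x)<\infty\}} M_{\tau(x)}^{-1}]$, which is exactly unbiasedness of $L_\tau(x)$.

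For the second moment, I would introduce the value function $g(y) = \mathbb{E}^{Q_{a_*}}_{y} L_\tau^2(x)$ on $\{y\le x\}$ with $g(y)=1$ for $y>x$, and derive from a first-step decomposition the recursion
\[g(y) = w(y+a_*)\int P(y,dz)\, v(z+a_*)^{-1}\bigl[\mathbbm{1}_{\{z\le x\}}g(z) + \mathbbm{1}_{\{z>x\}}\bigr].\]
The natural Lyapunov function is $h(y)=v^2(y+a_*)$, and the goal is $g\le K h$ for some $K$ uniform in $x$. Substituting $g\le K h$ and using $\int v(z+a_*) P(y,dz)=w(y+a_*)$ reduces the inductive step to
\[K\bigl(w^2(y+a_*) - v^2(y+a_*)\bigr) + w(y+a_*)\int_{\{z>x\}} P(y,dz)\bigl[v(z+a_*)^{-1} - K v(z+a_*)\bigr] \le 0.\]
Property \eqref{sec2eq3} bounds the first summand by $K\delta\,\mathbb{P}(X_1>-(y+a_*))\,w(y+a_*)$ for $y+a_*\le x+a_*(\delta)$, and property \eqref{sec2eq6} forces the overshoot integral to be $o\bigl(\mathbb{P}(X_1>-(y+a_*))\bigr)$; combining them, and choosing $K$ large and $\delta$ small, closes the inductive step for $y$ outside a bounded band below $x$. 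On the complementary band, long-tailedness of $W$ keeps $h(y)$ bounded below uniformly in $x$, so a direct estimate from the recursion handles those $y$. To finish, Pakes--Veraverbeke \eqref{sec2eq5} gives $\mathbb{P}(\tau(x)<\infty)\sim v(0)$ as $x\to\infty$, and long-tailedness of $W$ forces $v(a_*)/v(0)\to 1$; hence $K v^2(a_*)\le C\,\mathbb{P}(\tau(x)<\infty)^2$ uniformly in $x$, so $g(0)\le K v^2(a_*)$ yields strong efficiency.

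The main obstacle is the Lyapunov step: the negative drift provided by \eqref{sec2eq3} and the overshoot correction controlled by \eqref{sec2eq6} are of the same order $\mathbb{P}(X_1>-(y+a_*))$. For the induction to close, the quantitative little-$o$ in \eqref{sec2eq6} must dominate the constant $K\delta$ uniformly in $y$, which is precisely why $a_*$ has to be chosen depending on $\delta$ (and the implied constant in \eqref{sec2eq6}). This coupling between $a_*$ and $\delta$ is the technical heart of the argument; the rest is bookkeeping.
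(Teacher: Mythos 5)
The paper does not supply its own proof here: the theorem is imported verbatim from Blanchet and Glynn (2008), and the surrounding text merely summarizes the mechanism (``the authors were able to control the second moment of the estimator via a Lyapunov bound''). Your reconstruction --- mean-one martingale from $\mathbb{E}[v(S_k+a_*)\mid S_{k-1}]=w(S_{k-1}+a_*)$ plus Lemma~\ref{sec2lem1} for unbiasedness, a Lyapunov inequality with $h(y)\asymp v^2(y+a_*)$ and drift verified via \eqref{sec2eq3} and \eqref{sec2eq6}, and finally Pakes--Veraverbeke \eqref{sec2eq5} together with long-tailedness to pin $v(a_*)$ against $\mathbb{P}(\tau(x)<\infty)$ uniformly in $x$ --- is precisely the Blanchet--Glynn argument that the paper (and its Appendix~B, as the $\epsilon=0$ case of Proposition~\ref{appendixprop1}) sketches. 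So you have correctly reproduced the intended proof.

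One cosmetic remark: in your recursion $g(y)=w(y+a_*)\int P(y,dz)\,v(z+a_*)^{-1}[\mathbbm{1}_{\{z\le x\}}g(z)+\mathbbm{1}_{\{z>x\}}]$ you can simplify the overshoot term before invoking \eqref{sec2eq6}, since $W\ge 0$ forces $v(z+a_*)=1$ for $z\ge x-a_*$, so the overshoot bracket is exactly $1-K$ there and negative once $K>1$; the delicate band is only the sliver $x<z<x-a_*$. Appendix B reaches the same conclusion by the cleaner device of trading the second $Q$-moment for a first $P$-moment, i.e.\ $\mathbb{E}^{Q_{a_*}}L_\tau^2(x)=\mathbb{E}L_\tau(x)$, which you do not exploit but which is equivalent bookkeeping.
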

Theorem \ref{sec2thm1} implies that the following algorithm is strongly efficient.
\begin{algm}\label{sec2algm1}
\begin{description}
\item[]
\begin{description}
\item[]
\item[STEP 0.] For fixed $\delta\in(0,1)$, set $a_*\longleftarrow a_*(\delta)\leq0$ satisfying \eqref{sec2eq3}.
\item[STEP 1.] Initialize $s\longleftarrow 0$ and $L\longleftarrow1$.
\item[STEP 2.] Set $s^\prime\longleftarrow s$, generate a random variable $Y$ with the distribution given by
$$\mathbb{P}(Y\in dz)=\mathbb{P}(s^\prime + X_1\in dz\,|\,X_1+W>-(s^\prime-x)-a_*),$$
where $W$ is defined as in \eqref{sec2eq4}. Update $s\longleftarrow s^\prime+Y$ and
$$L\longleftarrow\frac{w(s^\prime+a_*)}{v(s+a_*)}L.$$
\item[STEP 3.] If $s>x$ then return $L$. Otherwise, go to STEP 2.
\end{description}
\end{description}
\end{algm}

\section{Stochastic Perpetuity and Iterated Random Functions}\label{SectionSPIRF}
In this section we specify the random object that can be handled by our algorithm. We start with an example of stochastic perpetuity and construct a stochastic upper bound that can be written as a functional of a suitable random walk $S_n(\gamma)$, which will be defined in three different levels of generality---Example~\ref{sec3ex1}, general stochastic perpetuities, and stochastic recursions of the form (\ref{sec3eq7}). Furthermore, using the upper bound we can define crossing levels $s(x)$ (which  will be defined in three different levels of generality as well) and a stopping time
$$\tau_\gamma(x)=\inf\{n\geq0:S_n(\gamma)>s(x)\},$$
such that
\begin{equation}\label{sec3eq1}
\{Z>x\}\subseteq\{\max_{n\geq0}S_n(\gamma)>s(x)\}\text{\quad and\quad}\tau_\gamma(x)\leq T(x).
\end{equation}
Since the change of measure proposed in \citeA{blanchetglynn2008} is strongly efficient for estimating the tail probability of the maximum of heavy-tailed random walks, a natural strategy is to keep track of the random process $\{S_n(\gamma)\}_{n\in\mathbb{N}}$ while simulating $Z^{(m)}$, until the stopping time $\tau_\gamma(x)$. By doing this, we can construct a state-dependent change of measure using the path of the random walk until $\tau_\gamma(x)$ according to the method introduced in Example \ref{sec2ex1}. Then we simulate the path of the random walk after $\tau_\gamma(x)$ under the original measure. In the second step we extend the method to the general case. Other properties such as efficiency will be discussed in Section \ref{SectionAUBSE} and \ref{SectionAU}.
\subsection{Stochastic Perpetuity}\label{SectionSP}
To illustrate our idea, let us consider a simple example, namely, a stochastic perpetuity that generates exact one unit of money at each time point $n$. 

\begin{example}\label{sec3ex1}
Consider the Markov chain defined via the random difference equations
\begin{equation}\label{sec3eq0}
Z_{n+1} = A_{n+1} Z_n+1,\text{\ }t\in\mathbb{N},
\end{equation}
where $\mathbb{E}[\log A_1]<0$ and $\{A_n\}_{n\in\mathbb{N}}$ is an i.i.d.\ sequence of positive random variables, which is independent of $Z_0$. The unique stationary distribution of this Markov chain has the same distribution as the random variable
$$Z \triangleq  \sum_{n=0}^\infty e^{S_n},$$
where $S_n \triangleq  S_{n-1}+X_n$, $S_0 \triangleq  0$ and $X_n \triangleq  \log A_n$. Let $\gamma_1\in(0,-\mathbb{E}X_1)$ be fixed. For the stochastic perpetuity $Z$, we observe that
\begin{equation}\label{sec3eq2}
Z = \sum_{n=0}^\infty \exp(S_n)=\sum_{n=0}^\infty \exp\{S_n+n\gamma_1\}\exp(-n\gamma_1)\leq \exp\left\{\max\limits_{n\geq0}(S_n+n\gamma_1)\right\}\frac{1}{1-e^{-\gamma_1}}.
\end{equation}
Using \eqref{sec3eq2} we can define $s(x)\triangleq \log x+\log(1-e^{-\gamma_1})$ and $\tau_\gamma(x)\triangleq \inf\{n\geq0:S_n+n\gamma_1>s(x)\}$, such that \eqref{sec3eq1} holds. To see $\tau_\gamma(x)\leq T(x)$, suppose $\tau_\gamma(x)=N_\tau$, we have that $\max_{n\leq N_\tau-1}S_n(\gamma)\leq s(x)$. Combining this with the fact that
\begin{align*}
\sum_{n=0}^{N_\tau-1} \exp(S_n)&=\sum_{n=0}^{N_\tau-1} \exp\{S_n+n\gamma_1\}\exp(-n\gamma_1)\\
&\leq\exp\left\{\max\limits_{n\leq N_\tau-1}S_n+n\gamma_1\right\}\frac{1-e^{N_\tau}}{1-e^{-\gamma_1}}\\
&\leq\exp\left(s(x)\right)\frac{1}{1-e^{-\gamma_1}}\\
&=x,
\end{align*}
we can conclude that $T(x)\geq N_\tau=\tau_\gamma(x)$.
\end{example}

Example \ref{sec3ex1} shows that we can bound $Z$ by a functional of the mean-shifted random walk, if we assume that there is no randomness in $B_n$. However, extending the idea to general stochastic perpetuities is not straightforward. The reason is that we can not deal $B_n$ separately due to the potential dependence structure between $\{A_n\}_{n\in\mathbb{N}}$ and $\{B_n\}_{n\in\mathbb{N}}$. To be precise, consider the Markov chain defined via the stochastic difference equations
$$Z_{n+1} = A_{n+1} Z_n+B_{n+1},\text{\ }n\in\mathbb{N},$$
where $\mathbb{E}[\log A_1]<0$, $\mathbb{E}[\log^+ B_1]<\infty$ and $\{(A_n,B_n)\}_{n\in\mathbb{N}}$ is an i.i.d.\ sequence of positive random vectors that are independent of $Z_0$. First we need to assume the following.

\begin{assumption}\label{sec3ass1}
Assume that $(A_1,B_1)$ satisfies the conditions as follows.
\begin{description}
\item[a)] Let $A_1$, $B_1$>0 a.s, $\mathbb{E}[\log A_1]<0$ and $\mathbb{E}[\log^+B_1]<\infty$.
\item[b)] $\mathbb{E}[(\log^+(\max(A_1,B_1)))^{1+\eta}]<\infty$, for some $\eta>0$.
\item[c)] For $(A_1,B_1)$ we have that
$$\mathbb{P}(A_1>x,B_1\leq-x)=\omicron(\mathbb{P}(\max(A_1,B_1)>x)).$$
\end{description}
\end{assumption}

Under Assumption \ref{sec3ass1}, it is well known (cf.\ \citeNP{buraczewskidamekmikosch2016} and \citeNP{dyszewski2016}) that the unique stationary distribution of this Markov chain exists, has right-unbounded support and has the same distribution as the random variable
$$Z \triangleq  \sum_{n=0}^\infty B_{n+1} e^{S_n},$$
where $S_n \triangleq  S_{n-1}+X_n$, $S_0 \triangleq  0$ and $X_n \triangleq  \log A_n$. In the next step we want to construct an upper bound for $Z$, which can be written as a functional of a suitable random walk $S_n(\gamma)$. We have the following Lemma.
\begin{lemma}\label{sec3lem1}
Under Assumption \ref{sec3ass1}, there exists a constant $\gamma_{2}$ such that
$$\mathbb{E}[\max\left(\log^+ B_1-\gamma_{2},\log A_1\right)]<0.$$
Moreover, there exists a constant $\gamma_{1}\in(0,-\mathbb{E}\max(\log^+B_1-\gamma_{2},\log A_1))$ such that
\begin{equation}\label{sec3eq3}
Z \leq e^{\gamma_{2}}\sum_{n=0}^\infty e^{S^\prime_n}\leq \exp\left\{\max\limits_{n\geq0}S_n(\gamma)\right\}\frac{e^{\gamma_{2}}}{1-e^{-\gamma_{1}}},
\end{equation}
where $\gamma=(\gamma_{1},\gamma_{2})$, $S^\prime_n=S^\prime_{n-1}+\max(\log^+B_n-\gamma_{2},\log A_n)$ and $S_n(\gamma)=S^\prime_n+n\gamma_{1}$.
\end{lemma}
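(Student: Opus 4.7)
The plan is to establish the two assertions separately. For the first, I would argue by dominated convergence that $\mathbb{E}[\max(\log^+ B_1 - \gamma_{2}, \log A_1)] \to \mathbb{E}[\log A_1]$ as $\gamma_{2} \to \infty$. Specifically, for each $\gamma_{2} \geq 0$ the random variable $\max(\log^+ B_1 - \gamma_{2}, \log A_1)$ lies between $\log A_1$ (integrable, since $\mathbb{E}[\log A_1]$ is finite and negative by Assumption \ref{sec3ass1}(a)) and $\log^+ B_1 + \log^+ A_1$ (integrable, since $\log^+(\max(A_1,B_1))$ even has a finite $(1+\eta)$-moment by Assumption \ref{sec3ass1}(b)). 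Since $B_1 > 0$ a.s., $\log^+ B_1 - \gamma_{2} \to -\infty$ a.s.\ as $\gamma_{2}\to\infty$, so the max converges pointwise to $\log A_1$. Dominated convergence then gives $\mathbb{E}[\max(\log^+ B_1 - \gamma_{2}, \log A_1)] \to \mathbb{E}[\log A_1] < 0$, so any sufficiently large $\gamma_{2}$ works. The existence of a $\gamma_{1} \in (0, -\mathbb{E}[\max(\log^+ B_1 - \gamma_{2}, \log A_1)])$ is then immediate.

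For the second assertion I would give a direct term-by-term bound. Writing $Y_k \triangleq \max(\log^+ B_k - \gamma_{2}, \log A_k)$, by construction $\log A_k \leq Y_k$ and $\log^+ B_{n+1} \leq \gamma_{2} + Y_{n+1}$, while $B_{n+1} \leq e^{\log^+ B_{n+1}}$ trivially since $B_{n+1} > 0$. Combining these three inequalities,
$$B_{n+1} e^{S_n} \;\leq\; \exp(\log^+ B_{n+1} + S_n) \;\leq\; \exp(\gamma_{2} + Y_{n+1} + S'_n) \;=\; e^{\gamma_{2}} e^{S'_{n+1}}.$$
Summing over $n \geq 0$ yields $Z \leq e^{\gamma_{2}} \sum_{n=1}^\infty e^{S'_n} \leq e^{\gamma_{2}} \sum_{n=0}^\infty e^{S'_n}$, which is the first inequality of \eqref{sec3eq3}. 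The second inequality then follows from exactly the shift-and-factor trick used in Example \ref{sec3ex1}:
$$\sum_{n=0}^\infty e^{S'_n} \;=\; \sum_{n=0}^\infty e^{S_n(\gamma) - n\gamma_{1}} \;\leq\; \exp\Bigl\{\max_{n\geq 0} S_n(\gamma)\Bigr\} \sum_{n=0}^\infty e^{-n\gamma_{1}} \;=\; \frac{1}{1-e^{-\gamma_{1}}}\exp\Bigl\{\max_{n\geq 0} S_n(\gamma)\Bigr\}.$$

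There is no real obstacle here; the proof is routine once one fixes the right definition of $Y_k$. The only conceptual point is that by absorbing the random factor $B_{n+1}$ into the increments of a slightly enlarged random walk (via the shift by $\gamma_{2}$), one bypasses the dependence between $\{A_n\}$ and $\{B_n\}$ flagged in the paragraph preceding the lemma: one never has to separate $\log B_n$ from $\log A_n$. Note that Assumption \ref{sec3ass1}(c) plays no role in this lemma and is presumably used only later, for the tail asymptotics in Sections~\ref{SectionAUBSE}--\ref{SectionAU}.
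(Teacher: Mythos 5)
Your proposal is correct and follows essentially the same route as the paper: dominated convergence (with the same dominating bound $\log A_1\le\max(\log^+B_1-\gamma_2,\log A_1)\le\log^+B_1+\log^+A_1$, integrable by Assumption~\ref{sec3ass1}(b)) to choose $\gamma_2$, then the component-wise bound $\log^+B_{n+1}+S_n\le\gamma_2+S'_{n+1}$ followed by the geometric-series factorization. The paper phrases the middle step via $B_{n+1}\le\max(B_{n+1},1)=e^{\log^+B_{n+1}}$, which is exactly your observation that $B_{n+1}\le e^{\log^+B_{n+1}}$ since $B_{n+1}>0$; your remark that Assumption~\ref{sec3ass1}(c) is unused here is also accurate.
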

\begin{proof}
Under Assumption \ref{sec3ass1} a) we obtain that
$$\lim_{\gamma^\prime_2\to\infty}\max(\log^+ B_1-\gamma^\prime_2,\log A_1)=\log A_1,\text{\ a.s,}$$
since $\log^+ B_1$ and $\log A_1$ are finite a.s. Furthermore, we have that
\begin{equation}\label{sec3eq4}
|\max(\log^+ B_1-\gamma^\prime_2,\log A_1)|\leq|\max(\log^+ B_1,\log A_1)|+|\log A_1|.
\end{equation}
Using Assumption \ref{sec3ass1} b) we have that $\mathbb{E}\log^+ A_1 \leq \mathbb{E}\log^+ \max\left(A_1,B_1\right)<\infty$. Since $\mathbb{E}\log A_1<0$, we conclude that $\mathbb{E}\left|\log A_1\right|<\infty$.
Combining this with \eqref{sec3eq4} we have that for any $\gamma_1>0$,
\begin{align*}
\mathbb{E}\left|\max(\log^+ B_1-\gamma^\prime_1,\log A_1)\right|&\leq\mathbb{E}\left|\max(\log^+ B_1,\log A_1)\right|+\mathbb{E}\left|\log A_1\right|\\
&=\mathbb{E}\left|\log^+(\max(B_1,A_1))\right|+\mathbb{E}\left|\log A_1\right|<\infty.
\end{align*}
Using the dominated convergence theorem we obtain the existence of $\gamma_{2}$. For the stochastic perpetuity $Z$ and the constant $\gamma_{2}>0$, we have that
\begin{equation}
Z\leq\sum_{n=0}^\infty\max\left(B_{n+1},1\right)e^{S_n}=e^{\gamma_{2}}\sum_{n=0}^\infty e^{(\log^+ B_{n+1}-\gamma_{2})+S_n}\leq e^{\gamma_{2}}\sum_{n=0}^\infty e^{S^\prime_n},\label{sec3eq5}
\end{equation}
where $S^\prime_n=S^\prime_{n-1}+\max(\log^+B_n-\gamma_{2},\log A_n)$. To see the last inequality, comparing $S^\prime_{n+1}$ with $(\log^+B_{n+1}-\gamma_{2})+S_n$ component wise, we have that
\begin{align*}
\left(\log^+B_{n+1}-\gamma_{2}\right)+S_n&= \left(\log^+B_{n+1}-\gamma_{2}\right)+\sum_{k=1}^n \log A_k\\
&\leq \max(\log^+B_{n+1}-\gamma_{2},\log A_{n+1})+\sum_{k=1}^n \max(\log^+B_k-\gamma_{2},\log A_k)= S^\prime_{n+1}.
\end{align*}
Now let $\gamma_{1}\in(0,-\mathbb{E}\max(\log^+B_1-\gamma_{2},\log A_1))$ be fixed. From \eqref{sec3eq5}, we observe that
$$Z \leq e^{\gamma_{2}}\sum_{n=0}^\infty e^{S^\prime_n}=e^{\gamma_{2}}\sum_{n=0}^\infty \exp\{S^\prime_n+n\gamma_{1}\}\exp(-n\gamma_{1})\leq \exp\left\{\max\limits_{n\geq0}S_n(\gamma)\right\}\frac{e^{\gamma_{2}}}{1-e^{-\gamma_{1}}},$$
where $\gamma=(\gamma_{1},\gamma_{2})$ and $S_n(\gamma)\triangleq S_{n-1}(\gamma)+\max(\log^+B_n-\gamma_{2},\log A_n)+\gamma_1=S^\prime_n+n\gamma_{1}$.
\end{proof}

Now from \eqref{sec3eq3} we can define $s(x)\triangleq \log x-\gamma_{2}+\log(1-e^{-\gamma_{1}})$ and $\tau_\gamma(x)\triangleq \inf\{n\geq0:S_n(\gamma)>s(x)\}$, such that \eqref{sec3eq1} holds.

\subsection{Iterated Random Functions}\label{SectionIRF}
As we indicated in the introduction, stochastic perpetuities can be considered as a special case of \eqref{sec1eq1} with $\Psi_n$ being an affine transformation. Thus, it is natural to consider the Markov chain given by \eqref{sec1eq1}, where for each $\Psi_n$ there exists a random vector $(A_n,B_n,D_n)$ satisfying 
\begin{equation}\label{sec3eq7}
A_n z+B_n-D_n\leq\Psi_n(t)\leq A_n z^+ +B_n^+ +D_n,
\end{equation}
for $z\in\mathbb{R}$. We assume that $\{(A_n,B_n,D_n)\}_{n\in\mathbb{N}}$ are i.i.d.\ and $\{\Psi_n\}_{n\in\mathbb{N}}$ is a sequence of i.i.d.\ positive random Lipschitz functions with
$$\text{Lip}(\Psi_n) \triangleq  \sup_{t_1\neq t_2}\left|\frac{\Psi_n(t_1)-\Psi_n(t_2)}{t_1-t_2}\right|.$$
Analogous to Assumption \ref{sec3ass1}, we assume the following holds.
\begin{assumption}\label{sec3ass2}
Assume that $\Psi_n$ is such that $Z^{(n)}$ defined as in \eqref{sec1eq5} is increasing in $n$. Moreover, assume that \eqref{sec3eq7} hold, and that $(A_1,B_1,D_1)$ satisfies the following conditions:
\begin{description}
\item[a)] $A_1,D_1>0$ a.s, $\mathbb{E}[\log A_1]>-\infty$ and $\mathbb{E}[\log\text{Lip}(\Psi_1)]<0$. Moreover, $\mathbb{E}\left[\log^+|B_1^+ +D_1|\right]<\infty$ and $\mathbb{E}\left[\log^+|B_1-D_1|\right]<\infty$.
\item[b)] $\mathbb{E}\left[(\log^+(\max(A_1,B_1)))^{1+\eta}\right]<\infty$, for some $\eta>0$.
\item[c)] The following tail behaviors
$$\mathbb{P}(\max(A_1,B_1^+ +D_1)>x)\sim\mathbb{P}(\max(A_1,B_1)>x),$$
$$\mathbb{P}(\max(A_1,B_1-D_1)>x)\sim\mathbb{P}(\max(A_1,B_1)>x)$$
and
$$\mathbb{P}(A_1>x,B_1-D_1\leq-x)=\omicron(\mathbb{P}(\max(A_1,B_1)>x))$$
hold.
\end{description}
\end{assumption}

\begin{rmk}
We want to mention that there are interesting examples that satisfy the increasing property of the backward iteration. For instance, consider the stochastic equation given by
$$Z_{n+1}=\sqrt{A_{n+1}(Z_n)^2+B_{n+1}Z_n+C_{n+1}}.$$
This corresponds to a second order random polynomial equation, which is studied by \citeA{goldie1991}.

\end{rmk}

Define $Z \triangleq  \lim_{n\to\infty}\Psi_{1:n}(0)$. Recall that (cf.\ \citeNP{dyszewski2016}), under Assumption \ref{sec3ass2}, the unique stationary solution to \eqref{sec1eq1} exists, has the same distribution as $Z$, has right-unbounded support and can be bounded from above with a stochastic perpetuity $\bar{Z}$, which is given by
$$\bar{Z} \triangleq  \sum_{n=0}^\infty \bar{B}_{n+1} e^{S_n},$$
where $\bar{B}_n \triangleq  \max\left(B_n^+ +D_n,1\right)$, $S_n \triangleq  S_{n-1}+ \log(A_n)$ and $S_0 \triangleq  0$ (cf.\ \citeNP{dyszewski2016}). Analogous to the previous section, our goal should be to construct an upper bound for $\bar{Z}$ (and thus for $Z$) that can be written as a functional of the maximum of a suitable random walk $S_n(\gamma)$. First we claim the following lemma.
\begin{lemma}\label{sec3lem2}
Under Assumption \ref{sec3ass2}, there exists a constant $\gamma_{2}$ such that
$$\mathbb{E}\left[\max\left(\log^+ \left(B_1^+ + D_1\right)-\gamma_{2},\log A_1\right)\right]<0.$$
Moreover, there exists a constant $\gamma_{1}\in(0,-\mathbb{E}\max(\log^+B_1-\gamma_{2},\log A_1))$ such that
\begin{equation}\label{sec3eq6}
Z \leq e^{\gamma_{2}}\sum_{n=0}^\infty e^{S^\prime_n}\leq \exp\left\{\max\limits_{n\geq0}S_n(\gamma)\right\}\frac{e^{\gamma_{2}}}{1-e^{-\gamma_{1}}},
\end{equation}
where $\gamma=(\gamma_{1},\gamma_{2})$, $S^\prime_n=S^\prime_{n-1}+\max\left(\log^+ \left(B_n^+ + D_n\right)-\gamma_{2},\log A_n\right)$ and $S_n(\gamma)=S^\prime_n+n\gamma_{1}$.
\end{lemma}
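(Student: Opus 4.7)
The plan is to follow the pattern of Lemma~\ref{sec3lem1} almost verbatim, replacing $B_n$ with the dominating quantity $B_n^+ + D_n$ and starting from the stochastic perpetuity upper bound $\bar Z$ provided by \citeA{dyszewski2016}. The argument splits naturally into two independent pieces: (i) existence of $\gamma_2$, and (ii) the deterministic pathwise upper bound on $\bar Z$ (hence on $Z$).

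For piece (i), I would mimic the dominated convergence argument of Lemma~\ref{sec3lem1}. As $\gamma_2\to\infty$, $\max(\log^+(B_1^+ + D_1)-\gamma_2,\log A_1)\to\log A_1$ almost surely because $\log^+(B_1^++D_1)$ is a.s.\ finite by Assumption~\ref{sec3ass2}a. A uniform (in $\gamma_2\ge 0$) dominating function is $\log^+(B_1^++D_1)+|\log A_1|$. Integrability of the first summand is exactly Assumption~\ref{sec3ass2}a, while $\E|\log A_1|<\infty$ follows from Assumption~\ref{sec3ass2}a ($\E\log A_1>-\infty$) together with Assumption~\ref{sec3ass2}b, which bounds $\E\log^+ A_1$ through $\E\log^+\max(A_1,B_1)$. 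DCT then gives $\E\max(\log^+(B_1^++D_1)-\gamma_2,\log A_1)\to\E\log A_1<0$, so any sufficiently large $\gamma_2$ works; then $\gamma_1\in(0,-\E\max(\log^+(B_1^++D_1)-\gamma_2,\log A_1))$ is nonempty.

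For piece (ii), I would start from the stochastic perpetuity bound $Z\le\bar Z=\sum_{n=0}^\infty \bar B_{n+1}e^{S_n}$ with $\bar B_n=\max(B_n^++D_n,1)$ (this is the content of the Dyszewski bound already quoted in the paper). Since $\bar B_{n+1}=\exp\{\log^+(B_{n+1}^++D_{n+1})\}$, factoring out $e^{\gamma_2}$ gives
\[
\bar Z=e^{\gamma_2}\sum_{n=0}^\infty\exp\{(\log^+(B_{n+1}^++D_{n+1})-\gamma_2)+S_n\}.
\]
The key pointwise inequality, proved componentwise exactly as in Lemma~\ref{sec3lem1}, is
\[
(\log^+(B_{n+1}^++D_{n+1})-\gamma_2)+S_n\le S'_{n+1},
\]
since each summand of $S_n=\sum_{k=1}^n\log A_k$ is dominated by the corresponding $\max(\log^+(B_k^++D_k)-\gamma_2,\log A_k)$ in $S'_n$, and the extra term $\log^+(B_{n+1}^++D_{n+1})-\gamma_2$ is dominated by the $(n{+}1)$-st summand of $S'_{n+1}$. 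Summing and re-indexing $m=n+1$ then yields $\bar Z\le e^{\gamma_2}\sum_{n=0}^\infty e^{S'_n}$, which establishes the first inequality in \eqref{sec3eq6}.

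For the second inequality, insert the shift $e^{n\gamma_1}e^{-n\gamma_1}$ inside the sum as in \eqref{sec3eq2}:
\[
\sum_{n=0}^\infty e^{S'_n}=\sum_{n=0}^\infty e^{S'_n+n\gamma_1}e^{-n\gamma_1}\le \exp\!\Bigl\{\max_{n\ge 0}S_n(\gamma)\Bigr\}\sum_{n=0}^\infty e^{-n\gamma_1}=\frac{\exp\{\max_{n\ge 0}S_n(\gamma)\}}{1-e^{-\gamma_1}},
\]
giving the stated bound. The main (and essentially only) obstacle is bookkeeping around the integrability checks in piece (i) under Assumption~\ref{sec3ass2} rather than Assumption~\ref{sec3ass1}; once $\E\log^+(B_1^++D_1)<\infty$ and $\E|\log A_1|<\infty$ are in hand, the rest is a formal copy of the previous lemma's argument with $B_n$ replaced by $B_n^++D_n$.
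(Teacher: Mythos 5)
Your proof is correct and follows exactly the route the paper intends: the paper's own proof of Lemma~\ref{sec3lem2} consists of the single line ``Analogous to the proof of Lemma~\ref{sec3lem1},'' and your argument is precisely that analogy spelled out---the dominated-convergence existence of $\gamma_2$ (with the dominating function $\log^+(B_1^++D_1)+|\log A_1|$, integrable by Assumption~\ref{sec3ass2}a and~b), followed by the pointwise domination $\bar B_{n+1}e^{S_n}\le e^{\gamma_2}e^{S'_{n+1}}$ starting from the Dyszewski bound $Z\le\bar Z$. No gaps; this matches the paper's approach.
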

\begin{proof}
Analogous to the proof of Lemma \ref{sec3lem1}.
\end{proof}

Now from \eqref{sec3eq6} we can define $s(x) \triangleq  \log x-\gamma_{2}+\log(1-e^{-\gamma_{1}})$ and $\tau_\gamma(x) \triangleq  \inf\{n\geq0:S_n(\gamma)>s(x)\}$, such that \eqref{sec3eq1} holds.

\section{Asymptotic Upper Bound and Strong Efficiency}\label{SectionAUBSE}
Recall that in Section \ref{SectionSPIRF} we have developed a stochastic upper bound for each of our random objects. Moreover, using these upper bounds we also have defined a crossing level $s(x)$ and a stopping time
$$\tau_\gamma(x)=\inf\{n\geq0:S_n(\gamma)>s(x)\},$$
such that $\{Z>x\}\subseteq\{\max_{n\geq0}S_n(\gamma)>s(x)\}$ and $\tau_\gamma(x)\leq T(x)$, for each case respectively.

It turns out that the upper bounds we derived in the previous section are not only helpful for constructing our algorithm, they can also be used to derive an asymptotic upper bound for $\mathbb{P}(Z>x)$ as $x\to\infty$. In this section, we first analyze the asymptotic behavior of $\mathbb{P}(Z>x)$. After that, we show that our estimator is strongly efficient.
\subsection{Asymptotic Upper Bound}
In the theory of large deviations, one is interested in results such as the asymptotic behavior of $\mathbb{P}(Z>x)$ as $x\to\infty$. More precisely, we are interested in finding a function $f(x)$ such that $\mathbb{P}(Z>x)\sim f(x)$ as $x\to\infty$. Usually, obtaining an asymptotic lower bound is easier than obtaining an upper bound. It turns out that the stochastic upper bounds we derived for $Z$ can be used in deriving the asymptotic upper bound of $\mathbb{P}(Z>x)$. To illustrate this, let us get back to our example.

\addtocounter{example}{-1}
\begin{example}[continued]
Consider a stochastic perpetuity with $B_n=1$, i.e.\ $Z = \sum_{n=0}^\infty e^{S_n}$. Moreover, assume that the integrated tail of $X_1$ is subexponential. On the one hand, using \eqref{sec3eq5}, we obtain that
$$\mathbb{P}(Z>x)\leq\mathbb{P}\left(\max_{n\geq0}S_n(\gamma)>s(x)\right).$$
Since the integrated tail of $X_1+\gamma_1$ is also subexponential, applying the Pakes-Veraverbekes Theorem we have that
\begin{equation}\label{sec4eq1}
\limsup_{x\to\infty}\frac{\mathbb{P}(Z>x)}{\int_{\log x}^\infty \mathbb{P}(X_1 > t) dt}\leq\limsup_{x\to\infty}\frac{\mathbb{P}\left(\max\limits_{n\geq0}S_n(\gamma)>s(x)\right)}{\int_{\log x}^\infty \mathbb{P}(X_1 > t) dt}=-\frac{1}{\mathbb{E}X_1+\gamma_1}.
\end{equation}
Letting $\gamma_1\to0$ we conclude that
$$\limsup_{x\to\infty}\frac{\mathbb{P}(Z>x)}{\int_{\log x}^\infty \mathbb{P}(X_1 > t) dt}\leq-\frac{1}{\mathbb{E}X_1}.$$
On the other hand we observe that $Z\geq \exp\left(\max_{n\geq0} S_n\right)$. Therefore applying again the Pakes-Veraverbekes Theorem we obtain that
\begin{equation}\label{sec4eq2}
\liminf_{x\to\infty}\frac{\mathbb{P}(Z>x)}{\int_{\log x}^\infty \mathbb{P}(X_1 > t) dt}\geq\liminf_{x\to\infty}\frac{\mathbb{P}\left(\max\limits_{n\geq0}S_n>\log x\right)}{\int_{\log x}^\infty \mathbb{P}(X_1 > t) dt}=-\frac{1}{\mathbb{E}X_1}.
\end{equation}
Combining \eqref{sec4eq1} and \eqref{sec4eq2} we conclude that
$$\mathbb{P}(Z>x)\sim-\frac{1}{\mathbb{E}X_1}\int_{\log x}^\infty \mathbb{P}\left(X_1>t\right) dt.$$
\end{example}

Consider the stationary distribution of the Markov chain given by \eqref{sec1eq1}. As we indicated earlier, \citeA{dyszewski2016} shows that under subexponential assumptions on the random variable $\log^+\max(A_1,B_1)$ the tail asymptotics can be described using the integrated tail function of $\log^+\max(A_1,B_1)$. However, the upper bound we derived in Section \ref{SectionIRF} yields us a shorter proof for the asymptotic upper bound in \citeA[Theorem 3.1]{dyszewski2016}.
\begin{lemma}{\rm{(A shorter proof of the asymptotic upper bound in \citeNP[Theorem 3.1]{dyszewski2016})}}\label{sec4lem1}
Let Assumption \ref{sec3ass2} hold. Furthermore, assume that the integrated tail of $\log\max(A_1,B_1)$ is subexponential. Then we have that
$$\limsup_{x\to\infty}\frac{\mathbb{P}(Z>x)}{\bar{F}_I(\log(x))}\leq-\frac{1}{\mathbb{E}[\log(A_1)]},$$
where $\bar{F}_I$ denotes the integrated tail of $\log\max(A_1,B_1)$.
\end{lemma}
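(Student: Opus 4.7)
The plan is to follow the argument of the continued example, applied now to the upper-bounding random walk $S_n(\gamma)$ from Lemma \ref{sec3lem2}, and then take limits in the auxiliary parameters $(\gamma_1,\gamma_2)$ to recover the sharp constant $-1/\mathbb{E}\log A_1$.

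First I would invoke Lemma \ref{sec3lem2} to obtain the set inclusion
$$\{Z>x\}\subseteq\Big\{\max_{n\geq0}S_n(\gamma)>s(x)\Big\},$$
with $s(x)=\log x-\gamma_2+\log(1-e^{-\gamma_1})=\log x+O(1)$, where $S_n(\gamma)$ is a random walk with i.i.d.\ increments
$$\xi_n=\max\big(\log^+(B_n^+ +D_n)-\gamma_2,\log A_n\big)+\gamma_1,$$
and $\mathbb{E}\xi_1<0$ for admissible $(\gamma_1,\gamma_2)$. This reduces the problem to an asymptotic upper bound for $\mathbb{P}(\max_{n\geq 0}S_n(\gamma)>s(x))$.

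Next I would verify the hypotheses of the Pakes--Veraverbeke theorem for $\{S_n(\gamma)\}$. Using that $\log\max(A_1,B_1)$ is long-tailed (since its integrated tail is subexponential) together with Assumption \ref{sec3ass2}(c), in particular $\mathbb{P}(\max(A_1,B_1^++D_1)>x)\sim\mathbb{P}(\max(A_1,B_1)>x)$, I would show that
$$\mathbb{P}(\xi_1>t)\sim\mathbb{P}(\log\max(A_1,B_1)>t)\qquad\text{as }t\to\infty,$$
where the constant shifts by $\gamma_1$ and $\gamma_2$ disappear by long-tailedness. This transfers subexponentiality of the integrated tail from $\log\max(A_1,B_1)$ to $\xi_1$, so Pakes--Veraverbeke applies. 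Combining this with the standard fact that tail equivalence passes to integrated tails, and with long-tailedness of $\bar{F}_I$ (which absorbs the $O(1)$ gap between $s(x)$ and $\log x$), yields
$$\limsup_{x\to\infty}\frac{\mathbb{P}(Z>x)}{\bar{F}_I(\log x)}\leq-\frac{1}{\mathbb{E}\xi_1}=-\frac{1}{\mathbb{E}[\max(\log^+(B_1^++D_1)-\gamma_2,\log A_1)]+\gamma_1}.$$

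Finally, I would let $\gamma_1\downarrow 0$ and then $\gamma_2\uparrow\infty$. By a dominated-convergence argument identical to the one used in Lemma \ref{sec3lem1}, $\mathbb{E}[\max(\log^+(B_1^++D_1)-\gamma_2,\log A_1)]\to\mathbb{E}\log A_1$, which produces the claimed bound $-1/\mathbb{E}\log A_1$. The main technical obstacle will be the careful transfer of subexponentiality and tail equivalence from $\log\max(A_1,B_1)$ to the modified variable $\xi_1$: this demands combining Assumption \ref{sec3ass2}(c) with long-tailedness in order to absorb simultaneously the max-structure (through $D_1$ and the $\log^+$) and the two constant shifts, and then lifting the resulting equivalence to the integrated tails appearing in the Pakes--Veraverbeke asymptotics.
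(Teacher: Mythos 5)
Your proposal is correct and takes essentially the same route as the paper's proof: invoke the stochastic domination from Lemma~\ref{sec3lem2}, transfer subexponentiality of the integrated tail to the increment $\xi_1=\max(\log^+(B_1^++D_1)-\gamma_2,\log A_1)+\gamma_1$ via Assumption~\ref{sec3ass2}(c) and long-tailedness, apply Pakes--Veraverbeke, and finally send $\gamma_1\to0,\ \gamma_2\to\infty$ using the dominated-convergence argument from Lemma~\ref{sec3lem1}. The only cosmetic difference is that the paper establishes tail equivalence for $\xi_1$ via a sandwich inequality $\log(\max(A_1,B_1^++D_1,1))-\gamma_2\le\xi_1-\gamma_1\le\log(\max(A_1,B_1^++D_1,1))$ rather than directly asserting $\mathbb{P}(\xi_1>t)\sim\mathbb{P}(\log\max(A_1,B_1)>t)$, but the substance is identical.
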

\begin{proof}
From the upper bound we constructed in Section \ref{SectionIRF}, we know that
\begin{equation}\label{sec4eq3}
\mathbb{P}(Z>x)\leq\mathbb{P}\left(\max_{n\geq0}S_n(\gamma)>s(x)\right).
\end{equation}
Due to Assumption \ref{sec3ass2} c) we know that the integrated tail of $\log(\max(A_1,B_1^+ +D_1,1))$ is also subexponential. Moreover, we have the following inequality:
\begin{align*}
\log\left(\max\left(A_1,B_1^+ +D_1,1\right)\right)-\gamma_{2}&\leq\log\left(\max\left(A_1,e^{-\gamma_{2}}\left(B_1^+ +D_1\right),e^{-\gamma_2}\right)\right)\leq\log\left(\max\left(A_1,B_1^+ +D_1,1\right)\right).
\end{align*}
The increments of the random walk $S_n(\gamma)$ have a subexponential integrated tail. Using the Pakes-Veraverbeke theorem we get the following relationship for the RHS of \eqref{sec4eq3}, namely
\begin{equation}\label{sec4eq7}
\mathbb{P}\left(\max_{n\geq0}S_n(\gamma)>s(x)\right)\sim-\frac{1}{\mathbb{E}[\max(\bar{B}_1-\gamma_{2},\log A_1)]+\gamma_{1}}\bar{F}_I(\log(x)).
\end{equation}
Now, letting $\gamma_{2}\to\infty$ and $\gamma_{1}\to0$ yields the result.
\end{proof}

\begin{rmk}
Note that the assumption of an increasing backward iteration of $\Psi_n$ is not needed in the proof of Lemma \ref{sec4lem1}. Therefore, it provides indeed a shorter proof of the asymptotic upper bound given in \citeA[Theorem 3.1]{dyszewski2016}.
\end{rmk}

\subsection{Strong Efficiency}
Given the asymptotic behavior of $\mathbb{P}(Z>x)$, we are able to show the strong efficiency of our estimator. Recall that, based on \eqref{sec3eq1}, our algorithm constructs a state-dependent change of measure according to the methods introduced by \citeA{blanchetglynn2008}. Define the following elements, which are needed for the change of measure, via
$$\mathbb{P}(W_\gamma>t) \triangleq  \min\left[1\,,\,\frac{1}{\mathbb{E}S_1(\gamma)}\int_t^\infty\mathbb{P}(S_1(\gamma)>s)ds\right],$$
\begin{equation}\label{sec4eq4}
v_\gamma(z) \triangleq  \mathbb{P}(W_\gamma>-(z-s(x))),
\end{equation}
and
\begin{equation}\label{sec4eq5}
w_\gamma(y) \triangleq  \mathbb{P}(S_1(\gamma)+W_\gamma>-(y-s(x))).
\end{equation}
We propose an estimator and show its strong efficiency in the following theorem.
\begin{thm}\label{sec4thm1}
Suppose that $\mathbb{E}S_1(\gamma)<0$ and $S_1(\gamma)$ belongs to $S^*$. Let $v_\gamma$ and $w_\gamma$ be defined as in \eqref{sec4eq4} and \eqref{sec4eq5}. For fixed $\delta\in(0,1)$, one can choose $a_*=a_*(\delta)\leq0$ so that
$$-\delta\leq\frac{v_\gamma^2(y)-w_\gamma^2(y)}{\mathbb{P}(X_1>-y)w_\gamma(y)},\quad \forall y\leq s(x)+a_*.$$
Let 
\begin{equation}\label{sec4eq6}
L_T(x)\triangleq \mathbbm{1}_{\{T(x)<\infty\}}\prod_{k=1}^{\tau_\gamma(x)}\frac{w_\gamma(S_{k-1}(\gamma)+a_*)}{v_\gamma(S_k(\gamma)+a_*)}.
\end{equation}
Then $L_T(x)$ is an unbiased estimator of $\mathbb{P}(Z>x)$ and
$$\sup_{x>1}\frac{\mathbb{E}^{Q^\gamma_{a_*}}L_T^2(x)}{\mathbb{P}(Z>x)^2}<\infty,$$
where $\mathbb{E}^{Q^\gamma_{a_*}}$ denotes the expectation w.r.t.\ the Markov chain $\{S_n(\gamma)\}_{n\in\mathbb{N}}$ having a one-step transition kernel
$$Q^\gamma_{a_*}(y,dz)=P(y,dz)\frac{v_\gamma(z+a_*)}{w_\gamma(y+a_*)}.$$
\end{thm}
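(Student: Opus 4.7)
The plan is to split the argument into (i) an unbiasedness step via the stopped-martingale change-of-measure identity, (ii) a second-moment reduction that replaces $L_T(x)$ with the random-walk estimator $L_\tau(x)$ from Theorem \ref{sec2thm1}, and (iii) a comparison of $\mathbb{P}(\max_{n\geq 0} S_n(\gamma) > s(x))$ with $\mathbb{P}(Z > x)$ that transports the efficiency from the walk to $Z$.

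For unbiasedness, I first verify that $M_n \triangleq \prod_{k=1}^{n} v_\gamma(S_k(\gamma)+a_*)/w_\gamma(S_{k-1}(\gamma)+a_*)$ is a non-negative martingale with $\mathbb{E}M_0=1$, which follows directly from definitions \eqref{sec4eq4}--\eqref{sec4eq5} via the identity $\mathbb{E}[v_\gamma(S_n(\gamma)+a_*)\,|\,\mathcal{F}_{n-1}] = w_\gamma(S_{n-1}(\gamma)+a_*)$. Let $\tilde{\mathbb{P}}$ denote the algorithmic measure under which the driving sequence is sampled from the conditional law that makes the $S_n(\gamma)$-increments follow $Q^\gamma_{a_*}$ for $n \leq \tau_\gamma(x)$ and from its original law for $n > \tau_\gamma(x)$. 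Applying Lemma \ref{sec2lem1} to the stopped martingale $M_{n \wedge \tau_\gamma(x)}$ and using the fact that $\mathbb{P}$ and $\tilde{\mathbb{P}}$ agree conditionally on $\mathcal{F}_{\tau_\gamma(x)}$, I obtain the identity $\mathbb{P}(A) = \tilde{\mathbb{E}}[\mathbbm{1}_A M_{\tau_\gamma(x)}^{-1}]$ for every $A \in \mathcal{F}$. Specializing to $A=\{T(x)<\infty\}\subseteq\{\tau_\gamma(x)<\infty\}$---the inclusion being \eqref{sec3eq1}---yields $\mathbb{P}(Z>x) = \tilde{\mathbb{E}}[L_T(x)]$.

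For the second moment, the inequality $\mathbbm{1}_{\{T(x)<\infty\}}\leq \mathbbm{1}_{\{\tau_\gamma(x)<\infty\}}$ gives
\begin{equation*}
\mathbb{E}^{Q^\gamma_{a_*}} L_T^2(x) \leq \mathbb{E}^{Q^\gamma_{a_*}}\left[\mathbbm{1}_{\{\tau_\gamma(x)<\infty\}}\prod_{k=1}^{\tau_\gamma(x)}\left(\frac{w_\gamma(S_{k-1}(\gamma)+a_*)}{v_\gamma(S_k(\gamma)+a_*)}\right)^{2}\right] = \mathbb{E}^{Q^\gamma_{a_*}} L_\tau^2(x),
\end{equation*}
and the right-hand side is precisely the quantity controlled by Theorem \ref{sec2thm1} applied to the walk $\{S_n(\gamma)\}$ at level $s(x)$. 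Note that the integrand depends only on the $S_n(\gamma)$-path up to $\tau_\gamma(x)$, so its expectation is unaffected by what the measure does after $\tau_\gamma(x)$. Theorem \ref{sec2thm1} then supplies a constant $C_1<\infty$ with $\mathbb{E}^{Q^\gamma_{a_*}} L_T^2(x) \leq C_1 \mathbb{P}(\max_{n\geq 0} S_n(\gamma) > s(x))^2$ uniformly in $x$.

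The remaining, and main, obstacle is to bound $\mathbb{P}(\max_{n\geq 0} S_n(\gamma) > s(x))/\mathbb{P}(Z>x)$ uniformly in $x>1$. The Pakes--Veraverbeke estimate \eqref{sec4eq7}, combined with $s(x) = \log x + O(1)$, gives $\mathbb{P}(\max_{n\geq 0} S_n(\gamma) > s(x)) \sim c_1 \bar{F}_I(\log x)$ with $c_1 = -1/\mathbb{E}S_1(\gamma)$, while the sharp tail asymptotics for $Z$ from \citeA{dyszewski2016} give $\mathbb{P}(Z>x) \sim c_2 \bar{F}_I(\log x)$ with $c_2>0$, so the ratio tends to $c_1/c_2<\infty$ as $x\to\infty$. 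On any bounded range $x \in (1,X_0]$, the denominator is bounded below by $\mathbb{P}(Z>X_0)>0$ because $Z$ has right-unbounded support, while the numerator is trivially at most $1$. Splitting the supremum at a suitable $X_0$ and combining the two regimes yields the desired uniform bound and completes the proof.
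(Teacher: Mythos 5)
Your proposal is correct and follows essentially the same route as the paper: unbiasedness via the stopped martingale $M_{n\wedge\tau_\gamma(x)}$ and Lemma \ref{sec2lem1}, the second-moment reduction using $\mathbbm{1}_{\{T(x)<\infty\}}\le\mathbbm{1}_{\{\tau_\gamma(x)<\infty\}}$ so that Theorem \ref{sec2thm1} controls the walk-level second moment, and the ratio $\mathbb{P}(\max_{n}S_n(\gamma)>s(x))/\mathbb{P}(Z>x)$ handled by Pakes--Veraverbeke on the numerator and the \citeA{dyszewski2016} asymptotics on the denominator together with boundedness on compacts. The only cosmetic difference is that you invoke the full sharp asymptotics for $\mathbb{P}(Z>x)$, whereas the paper uses only the $\liminf$ lower bound from \citeA[Theorem~3.1]{dyszewski2016}; either suffices.
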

\begin{proof}
Let
$$M_n^{-1}=\prod_{k=1}^{n}\frac{w_\gamma(S_{k-1}(\gamma)+a_*)}{v_\gamma(S_k(\gamma)+a_*)}.$$
Obviously, $\{M_n\}_{n\in\mathbb{N}}$ is a martingale, and therefore, $\{M_{n\wedge\tau_\gamma(x)}\}_{n\in\mathbb{N}}$ is also a martingale. Since $\tau_\gamma(x)\leq T(x)$, applying Lemma \ref{sec2lem1}, we can conclude that
$$\mathbb{E}^{Q^\gamma_{a_*}}L_T(x)=\mathbb{P}(T(x)<\infty)=\mathbb{P}(Z>x).$$
For the strong efficiency we have that
\begin{align*}
\frac{\mathbb{E}^{Q^\gamma_{a_*}}L_T^2(x)}{\mathbb{P}(Z>x)^2} &= \frac{\mathbb{E}^{Q^\gamma_{a_*}}\left[\mathbbm{1}_{\{Z>x\}}M_{\tau_\gamma}^{-2}(x)\right]}{\mathbb{P}(Z>x)^2}\\
&\leq \frac{\mathbb{E}^{Q^\gamma_{a_*}}\left[\mathbbm{1}_{\{\max\limits_{n\geq0}S_n(\gamma)>s(x)\}}M_{\tau_\gamma}^{-2}(x)\right]}{\mathbb{P}(Z>x)^2}\\
&= \frac{\mathbb{E}^{Q^\gamma_{a_*}}\left[\mathbbm{1}_{\{\max\limits_{n\geq0}S_n(\gamma)>s(x)\}}M_{\tau_\gamma}^{-2}(x)\right]}{\mathbb{P}\left(\max\limits_{n\geq0}S_n(\gamma)>s(x)\right)^2}\Bigg(\,\frac{\mathbb{P}\left(\max\limits_{n\geq0}S_n(\gamma)>s(x)\right)}{\mathbb{P}\left(Z>x\right)}\,\Bigg)^2,
\end{align*}
where the first term in the last equation is guaranteed to be bounded over $x\in(1,\infty)$ due to Theorem \ref{sec2thm1}. Hence, only the latter term remains to be analyzed. Define
$$\chi(x)=\frac{\mathbb{P}\left(\max\limits_{n\geq0}S_n(\gamma)>s(x)\right)}{\mathbb{P}\left(Z>x\right)}.$$
From \citeA[Theorem 3.1]{dyszewski2016} we have that
\begin{equation}\label{sec4eq8}
\liminf_{x\to\infty}\frac{\mathbb{P}(Z>x)}{\bar{F}_I(\log(x))}\geq-\frac{1}{\mathbb{E}\log A_1}.
\end{equation}
Since by assumption the integrated tail $\bar{F}_I$ is subexponential, it is in particular long tailed. Combining \eqref{sec4eq7} and \eqref{sec4eq8} we obtain that
\begin{equation}
\limsup_{x\to\infty}\chi(x)\leq\frac{\mathbb{E}\log A_1}{\mathbb{E}[\max(\bar{B}_1-\gamma_{2},\log A_1)]+\gamma_{1}}.
\end{equation}
Using the fact that $\chi(x)$ is bounded over a compact interval, we obtain that
$$\sup_{x>1}\frac{\mathbb{P}\left(\max\limits_{n\geq0}S_n(\gamma)>s(x)\right)}{\mathbb{P}\left(Z>x\right)}<\infty.$$
\end{proof}

\section{Asymptotic Unbiasedness}\label{SectionAU}
The estimator derived in Theorem \ref{sec4thm1} requires the computation of $\mathbbm{1}_{\{Z>x\}}$, and hence, is unbiased only if we can generate $Z$ in finite time. Generating a perfect sample from $Z$ in our current setting is not straightforward, although there is plenty of literature on this topic; see, for example, \citeA{blanchetwallwater2015} and \citeA{blanchetsigman2011}. Conditional on $\{\tau_\gamma(x)<\infty\}$, using the strong Markov property, we have that
$$Z = \Psi_{1:\tau_\gamma(x)}\left(Z^\prime\right),$$
where $Z^\prime \triangleq  \lim_{M\to\infty} \Psi_{\tau_\gamma(x)+1:\tau_\gamma(x)+M}(0) \,{\buildrel d \over =}\, Z$ is independent of $\Psi_{1:\tau_\gamma(x)}$. Therefore, a natural choice for approximating the distribution of $Z^\prime$ is a truncated sum. More precisely, letting $M\in\mathbb{N}$ be fixed; our modified estimator takes the form
$$L^\Delta_T(x,M)=\mathbbm{1}_{\{\tau_\gamma(x)<\infty,\, \Psi_{1:\tau_\gamma(x)}\left({Z^\prime}^{(M)}\right)>x\}}\prod_{k=1}^{\tau_\gamma(x)}\frac{w_\gamma(S_{k-1}(\gamma)+a_*)}{v_\gamma(S_k(\gamma)+a_*)},$$
where ${Z^\prime}^{(M)} \triangleq  \Psi_{\tau_\gamma(x)+1:\tau_\gamma(x)+M}(0)$. To illustrate this, let us consider an extension of Example \ref{sec3ex1}. 

\addtocounter{example}{-1}
\begin{example}[continued]
Consider a stochastic perpetuity with $B_n=1$. Moreover, for $\alpha>0$, assume that $X_1$ is regularly varying with index $\alpha+1$; i.e, for the tail distribution of $X_1$, we have that $\bar{F}(x)\sim x^{-\alpha-1}L(x)$ with $L$ being a slowly varying function. Let $a(x)$ denote the auxiliary function of $X_1$ (c.f, e.g.\ \citeNP{asmussenklueppelberg1996}) that is given by $a(x)=x/\alpha$ in the regularly varying case. On the set $\{\tau_\gamma(x)<\infty\}$, using the strong Markov property, we have that
$$Z = \sum_{n=0}^\infty e^{S_n}=A^\prime_x Z^\prime+B^\prime_x,$$
where $A^\prime_x=e^{S_{\tau_\gamma(x)}}$, $B^\prime_x=\sum_{n=0}^{\tau_\gamma(x)-1}e^{S_n}$, and $Z^\prime=\sum_{n=0}^{\infty} e^{S_{\tau_\gamma(x)+n}-S_{\tau_\gamma(x)}}$ is a random variable that is independent of $(A^\prime_x,B^\prime_x)$. Our modified estimator takes the form
$$L^\Delta_T(x,M)=\mathbbm{1}_{\{\tau_\gamma(x)<\infty,\, A^\prime_x {Z^\prime}^{(M)}+B^\prime_x>x\}}\prod_{k=1}^{\tau_\gamma(x)}\frac{w_\gamma(S_{k-1}(\gamma)+a_*)}{v_\gamma(S_k(\gamma)+a_*)},$$
where ${Z^\prime}^{(M)} = \sum_{n=0}^Me^{S_{\tau_\gamma(x)+n}-S_{\tau_\gamma(x)}}$. 
The relative bias is defined as
$$\Delta(x,M)=\Bigg|\, \frac{\mathbb{E}^{Q^\gamma_{a_*}}L^\Delta_T(x,M)-\mathbb{P}(T(x)<\infty)}{\mathbb{P}(T(x)<\infty)}\, \Bigg|.$$
From $\{\tau_\gamma(x)<\infty,\,\sum_{n=0}^{\tau_\gamma(x)+M}e^{S_n}>x\}\subseteq\{T(x)<\infty\}\subseteq\{\tau_\gamma(x)<\infty\}$ we obtain that
\begin{equation}\label{sec5eq1}
\Delta(x,M)=\Theta(x,M)\frac{\mathbb{P}\left(\max\limits_{n\geq0}S_n(\gamma)>s(x)\right)}{\mathbb{P}(Z>x)},
\end{equation}
where
\begin{align*}
\Theta(x,M)&=\mathbb{P}\left(A^\prime_x{Z^\prime}^{(M)}+B^\prime_x\leq x,A^\prime_xZ^\prime+B^\prime_x>x\, \middle|\, \tau_\gamma(x)<\infty\right)\\
&=\mathbb{P}\left(\frac{\log{{Z^\prime}^{(M)}}}{a(\log x)}\leq \frac{\log{\left(x-B^\prime_x\right)}}{a(\log x)}-\frac{\log{\left(A^\prime_x\right)}}{a(\log x)}<\frac{\log{Z^\prime}}{a(\log x)} \,\middle|\, \tau_\gamma(x)<\infty\right)\\
&=\mathbb{P}\left(\frac{\log{{Z^\prime}^{(M)}}}{a(\log x)}\leq \frac{\log{\left(x-B^\prime_x\right)}}{a(\log x)}-\frac{S_{\tau_\gamma(x)}(\gamma)-\tau_\gamma(x)\gamma}{a(\log x)}<\frac{\log{Z^\prime}}{a(\log x)} \,\middle|\, \tau_\gamma(x)<\infty\right).
\end{align*}
Note that we have seen in the proof of Theorem \ref{sec4thm1},  the latter term in the RHS of \eqref{sec5eq1} is bounded. Therefore, to show that the relative bias vanishes as $x \to \infty$, it is enough to  show that $\Theta(x,M)\to 0$ as $x\to \infty$. In the following corollary we derive the limiting distribution of $\xi_x$ conditional on $\{\tau_\gamma(x)<\infty\}$, where
$$\xi_x \triangleq  \frac{\log{\left(x-B^\prime_x\right)}}{a(\log x)}-\frac{S_{\tau_\gamma(x)}(\gamma)-\tau_\gamma(x)\gamma}{a(\log x)}=\frac{\log{\left(1-\frac{B^\prime_x}{x}\right)}}{a(\log x)}-\frac{S_{\tau_\gamma(x)}(\gamma)-\log x}{a(\log x)}+\frac{\tau_\gamma(x)\gamma}{a(\log x)}.$$
\begin{prop}\label{sec5corol1}
Let $\mu\triangleq-\mathbb{E}X_1$. Conditional on $\{\tau_\gamma(x)<\infty\}$, $\xi_x$ converges in distribution to $\xi \,{\buildrel d \over =}\, \gamma V_\alpha/(\mu-\gamma)-T_\alpha$ as $x\to\infty$, where $V_\alpha$ is a positive random variable and its tail is given by
$$\bar{G}_\alpha(x)=\mathbb{P}(V_\alpha>x)=(1+x/\alpha)^{-\alpha},\text{\ } x>0,$$
and $T_\alpha$ is defined on the same probability space, such that $\mathbb{P}(V_\alpha>x,T_\alpha>y)=\bar{G}_\alpha(x+y)$. Moreover, the density of $\xi$ is given by
$$f_\xi(y)=\frac{\mu-\gamma}{\mu}\left(1-\frac{y}{\alpha}\right)^{-\alpha-1}\mathbbm{1}_{\{y<0\}}+\frac{\mu-\gamma}{\mu}\left(1+\frac{(\mu-\gamma)y}{\alpha\gamma}\right)^{-\alpha-1}\mathbbm{1}_{\{y\geq0\}}.$$
\end{prop}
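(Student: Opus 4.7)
The plan is to decompose
\[
\xi_x \;=\; \underbrace{\frac{\log(1-B'_x/x)}{a(\log x)}}_{(\mathrm{I})} \;-\; \underbrace{\frac{S_{\tau_\gamma(x)}(\gamma)-\log x}{a(\log x)}}_{(\mathrm{II})} \;+\; \underbrace{\frac{\tau_\gamma(x)\,\gamma}{a(\log x)}}_{(\mathrm{III})}
\]
and identify the joint weak limit of (II) and (III) under $\mathbb{P}(\,\cdot\mid\tau_\gamma(x)<\infty)$ through the single-big-jump principle for heavy-tailed random walks; the density will then fall out of a direct convolution.

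First I would argue that (I) is negligible: since $B'_x=\sum_{n=0}^{\tau_\gamma(x)-1}e^{S_n}\le Z<\infty$ almost surely, $B'_x/x\to 0$ in probability and (I)$\to 0$. For (II) and (III), note that $S_n(\gamma)=S_n+n\gamma$ has drift $-\nu$ with $\nu:=\mu-\gamma>0$, so conditional on $\tau_\gamma(x)<\infty$ the crossing of $s(x)\sim\log x$ is asymptotically caused by a single large jump. If the big jump occurs at time $n$, the walk before it behaves typically, $S_{n-1}(\gamma)\approx-\nu n$, so the jump must satisfy $X_n\gtrsim\log x+\nu n$, and the overshoot $O_x:=S_{\tau_\gamma(x)}(\gamma)-s(x)$ equals $X_{\tau_\gamma(x)}-(\log x+\nu\tau_\gamma(x))$ up to a negligible error. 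Summing the single-big-jump contributions, I obtain for $t,v\ge 0$
\[
\mathbb{P}\!\left(\tfrac{\nu\tau_\gamma(x)}{a(\log x)}>t,\;\tfrac{O_x}{a(\log x)}>v\,\Bigm|\,\tau_\gamma(x)<\infty\right) \;\sim\; \frac{\bar{F}_I\bigl(\log x+(t+v)a(\log x)\bigr)}{\bar{F}_I(\log x)} \;\longrightarrow\; \bar{G}_\alpha(t+v),
\]
using regular variation of $\bar{F}_I$ with index $-\alpha$ and $a(\log x)=\log x/\alpha$. This identifies the joint limit of $(O_x/a(\log x),\,\nu\tau_\gamma(x)/a(\log x))$ as $(V_\alpha,T_\alpha)$, so (II)$\Rightarrow V_\alpha$ (the $O(1)$ gap $s(x)-\log x=\log(1-e^{-\gamma})$ is absorbed) and (III)$\Rightarrow\gamma T_\alpha/(\mu-\gamma)$. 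Therefore $\xi_x\Rightarrow\gamma T_\alpha/(\mu-\gamma)-V_\alpha$, which equals $\gamma V_\alpha/(\mu-\gamma)-T_\alpha=\xi$ in distribution, since $\bar{G}_\alpha(v+t)$ is symmetric in $v$ and $t$.

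For the density, I would compute the joint density from $\bar{H}(v,t)=(1+(v+t)/\alpha)^{-\alpha}$ as $f_{V,T}(v,t)=\frac{\alpha+1}{\alpha}(1+(v+t)/\alpha)^{-\alpha-2}$ on $(0,\infty)^2$ and apply the convolution formula $f_\xi(y)=\int_{\max(0,\,y/\beta)}^\infty f_{V,T}(v,\beta v-y)\,dv$ with $\beta:=\gamma/(\mu-\gamma)$. For $y<0$ the lower limit is $0$ and a single power integration gives $(\mu-\gamma)/\mu\,(1-y/\alpha)^{-\alpha-1}$; for $y\ge 0$ the lower limit shifts to $y/\beta$ and yields $(\mu-\gamma)/\mu\,(1+(\mu-\gamma)y/(\alpha\gamma))^{-\alpha-1}$, matching the stated two-piece formula.

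The hard part will be making the single-big-jump step fully rigorous: the joint asymptotic above must hold uniformly in $t,v\ge 0$, and this requires showing that contributions from configurations with two or more large jumps before $\tau_\gamma(x)$, as well as from atypical fluctuations of the light-tailed bulk, are all $o(\bar{F}_I(\log x))$. For regularly varying walks this is standard (Asmussen--Kl\"{u}ppelberg-type first-passage estimates), but care is needed to handle the conditioning on the rare event $\{\tau_\gamma(x)<\infty\}$; the scaling $\nu\tau_\gamma/a(\log x)$ is the natural one because $\nu\tau_\gamma$ lives on the same logarithmic scale as the level $s(x)$.
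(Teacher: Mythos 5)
Your overall plan — decompose $\xi_x$ into the three terms, kill the first, and read off the joint weak limit of the overshoot and passage time via the single-big-jump principle — is exactly the route the paper takes. The paper even skips your rederivation of the overshoot/crossing-time limit and just cites \citeA[Theorem 1.1]{asmussenklueppelberg1996}, which gives precisely the joint convergence of $\bigl(O_x/a(\log x),\,\nu\tau_\gamma(x)/a(\log x)\bigr)$ to $(V_\alpha,T_\alpha)$ conditional on $\{\tau_\gamma(x)<\infty\}$; the sign flip and the use of exchangeability of $(V_\alpha,T_\alpha)$ to rewrite $\gamma T_\alpha/(\mu-\gamma)-V_\alpha$ as $\gamma V_\alpha/(\mu-\gamma)-T_\alpha$ are both fine.

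The genuine gap is your treatment of term (I). You argue that $B'_x\le Z<\infty$ almost surely and conclude $B'_x/x\to 0$ in probability — but that conclusion has to hold under $\mathbb{P}(\cdot\mid\tau_\gamma(x)<\infty)$, and under that conditional law $Z/x$ is \emph{not} small: the events $\{\tau_\gamma(x)<\infty\}$ and $\{Z>x\}$ are asymptotically equivalent, so conditionally $Z$ is of order $x$ or larger. The a.s.\ finiteness of $Z$ under the unconditional law tells you nothing about the conditional behaviour of $B'_x/x$. What actually works — and is what the paper does — is the finer bound
$$
B'_x=\sum_{n=0}^{\tau_\gamma(x)-1}e^{S_n(\gamma)-n\gamma}\le \frac{e^{M_{\tau_\gamma(x)}}}{1-e^{-\gamma}},\qquad M_{\tau_\gamma(x)}\triangleq\max_{i\le\tau_\gamma(x)-1}S_i(\gamma),
$$
together with the fact (again from the Asmussen--Kl\"uppelberg single-big-jump analysis) that $M_{\tau_\gamma(x)}=\mathcal{O}_P(1)$ conditionally on $\{\tau_\gamma(x)<\infty\}$: before the jump that triggers the crossing, the walk looks like a typical drift $-\nu$ walk with an $O(1)$ running maximum. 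That conditional tightness of $M_{\tau_\gamma(x)}$ is the step your argument is missing, and it is not a cosmetic point — it is the same single-big-jump input you flagged as "the hard part" for (II) and (III), showing up once more to control the pre-jump prefactor $B'_x$.
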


Since both random variables $\log{{Z^\prime}^{(M)}}/a(\log x)$ and $\log{Z^{\prime}}/a(\log x)$ converge in probability to $0$ as $x\to\infty$, the relative bias vanishes.
\end{example}

It should be noted that the result from the example above remains valid, if we assume that $X_1$ belongs to the maximum domain of attraction of the Gumbel distribution. However, we want to show the result of vanishing relative bias in a general context as described in Section \ref{SectionIRF}. Moreover, we are only assuming that the integrated tail of $\log(\max(A_n,B_n))$ is subexponential. We need the following Lemma, of which the proof uses a similar technique as the proof of Theorem 1 in \citeA{palmowskizwart2007}.
\begin{lemma}
Let $\mu\triangleq-\mathbb{E}X_1$ and $\mu_\gamma\triangleq-\mathbb{E}S_1(\gamma)$. For $\nu,K>0$ consider the sets
$$E^{(1)}_n=E^{(1)}_n(K,\nu)=\left\{S_j\in(-j\left(\mu+\nu)-K,-j(\mu-\nu)+K\right),\,j\leq n\right\},$$
$$E^{(2)}_n=E^{(2)}_n(K,\nu)=\left\{S_j(\gamma)\in\left(-j(\mu_\gamma+\nu)-K,-j(\mu_\gamma-\nu)+K\right),\,j\leq n\right\},$$
and
$$E^{(3)}_n=E^{(3)}_n(K,\nu)=\left\{|\underline{B}_j|\leq e^{\nu j+K},\,j\leq n\right\},$$
where $\underline{B}_j=B_j-D_j$. Then, for $\nu,\epsilon>0$, there exists $K>0$, such that
$$\mathbb{P}\left(\bigcap_{n\geq1}\left(E^{(1)}_n\cap E^{(2)}_n\cap E^{(3)}_n\right)\right)\geq1-\epsilon.$$
\end{lemma}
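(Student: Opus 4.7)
The plan is to recast the intersection $\bigcap_{n\geq 1}(E^{(1)}_n \cap E^{(2)}_n \cap E^{(3)}_n)$ as the event that three explicit almost-surely-finite random variables are simultaneously dominated by $K$. Since each $E^{(i)}_n$ is decreasing in $n$ (a larger $n$ simply imposes constraints on more indices), the intersection equals
$$\{M^{(1)} \leq K\} \cap \{M^{(2)} \leq K\} \cap \{M^{(3)} \leq K\},$$
where
$$M^{(1)} := \sup_{j \geq 1}(|S_j + j\mu| - \nu j), \qquad M^{(2)} := \sup_{j \geq 1}(|S_j(\gamma) + j\mu_\gamma| - \nu j),$$
$$M^{(3)} := \sup_{j \geq 1}(\log^+|\underline{B}_j| - \nu j).$$
Once I show each $M^{(i)}$ is a.s.\ finite, taking $K$ large enough that $\mathbb{P}(M^{(i)} > K) \leq \epsilon/3$ for $i=1,2,3$ and applying the union bound closes the argument.

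For $M^{(1)}$ and $M^{(2)}$ I would appeal to the strong law of large numbers. Assumption \ref{sec3ass2}(a)(b) yields $\mathbb{E}|\log A_1| < \infty$ (the positive part from (b) and the negative part from $\mathbb{E}\log A_1 > -\infty$ in (a)), and the analogous bound for the increments of $S_n(\gamma)$ is exactly what was established in the proof of Lemma \ref{sec3lem2}. Thus $S_j/j \to -\mu$ and $S_j(\gamma)/j \to -\mu_\gamma$ almost surely, so $(|S_j + j\mu| - \nu j)/j \to -\nu < 0$ a.s., and similarly for the shifted walk. In particular $|S_j + j\mu| - \nu j \to -\infty$ almost surely, so its supremum is attained at an a.s.-finite index and is a.s.\ finite; the same holds for $M^{(2)}$.

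For $M^{(3)}$ I would use a Borel--Cantelli argument exploiting the $\log^+$ moment assumption $\mathbb{E}[\log^+|B_1 - D_1|] < \infty$ from Assumption \ref{sec3ass2}(a). By the i.i.d.\ structure of $\{\underline{B}_j\}$ and a routine integral-sum comparison,
$$\sum_{j=1}^\infty \mathbb{P}\bigl(\log^+|\underline{B}_j| > \nu j\bigr) = \sum_{j=1}^\infty \mathbb{P}\bigl(\log^+|\underline{B}_1| > \nu j\bigr) \leq \tfrac{1}{\nu}\mathbb{E}[\log^+|\underline{B}_1|] < \infty,$$
so the event $\{\log^+|\underline{B}_j| > \nu j\}$ occurs for only finitely many $j$ almost surely, giving $M^{(3)} < \infty$ a.s. The main obstacle is this last step, in the sense that it is the only point at which the heavy-tailed nature of the noise enters and one has to see that the $\log^+$ moment hypothesis is precisely the right ingredient for the Borel--Cantelli tail sum. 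With each $M^{(i)}$ a.s.\ finite, the choice of $K$ and the concluding union bound are immediate.
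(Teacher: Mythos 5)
Your proof is correct and follows essentially the same route as the paper's: SLLN controls the two random walks, the $\log^+$-moment condition on $\underline B_1$ controls the $E^{(3)}_n$ part, and monotonicity of the sets converts uniform-in-$n$ bounds into a bound on the intersection. The only cosmetic difference is that you carry out the Borel--Cantelli computation for $E^{(3)}_n$ explicitly, whereas the paper cites Palmowski and Zwart for exactly that step.
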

\begin{proof}
In the proof of Theorem 1 in \citeA{palmowskizwart2007}, the authors state that for any $\nu>0$ and any i.i.d.\ sequence $\{Y_n\}_{n\geq0}$ with $\mathbb{E}\left[\log^+|Y_1|\right]<\infty$, it holds that
$$\mathbb{P}\left(|Y_j|\leq e^{\nu j+K},\,j\leq n\right)\to1,$$
as $K\to\infty$ uniformly with respect to $n$. Using this argument we conclude that $\mathbb{P}\left(E^{(3)}_n\right)\to1$ as $K\to\infty$ uniformly with respect to $n$. Further, combining this fact with the SLLN for $\{S_n\}_{n\geq0}$ and $\{S_n(\gamma)\}_{n\geq0}$ (for details see eg.\ \citeNP[Lemma 3.1]{asmussenschmidlischmidt1999}), we can always take $K$ large enough such that
$$\mathbb{P}\left(E^{(1)}_n\cap E^{(2)}_n\cap E^{(3)}_n\right)\geq 1-\epsilon,$$
for all $n\in\mathbb{N}$. Finally, since the sequence of sets $\left\{E^{(1)}_n\cap E^{(2)}_n\cap E^{(3)}_n\right\}_{n\geq0}$ is decreasing in the sense of inclusion, we obtain the result.
\end{proof}
Due to the fact that $\left\{\tau_\gamma(x)<\infty,\Psi_{1:\tau_\gamma(x)}\left({Z^\prime}^{(M)}\right)>x\right\}\subseteq\{T(x)<\infty\}$, in order to prove the vanishing relative bias result, it is sufficient to show that
\begin{equation}\label{SectionGCeq1}
\liminf_{x\to\infty}\frac{\mathbb{P}\left(\tau_\gamma(x)<\infty,\Psi_{1:\tau_\gamma(x)}\left({Z^\prime}^{(M)}\right)>x\right)}{\mathbb{P}\left(T(x)<\infty\right)}\geq1.
\end{equation}
In the following theorem we use a similar proof technique as in the proof of Theorem 3.1 in \citeA{dyszewski2016}.
\begin{thm}\label{sec5thm1}
Let Assumption 2 hold. Moreover, we are assuming that the integrated tail of $\log(\max(A_1,B_1))$ is subexponential. Then \eqref{SectionGCeq1} holds.
\end{thm}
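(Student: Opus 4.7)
The plan is to adapt the one-big-jump strategy of \citeA[Theorem~3.1]{dyszewski2016} to the truncated estimator. The starting observation is the identity
$$\Psi_{1:\tau_\gamma(x)}\left({Z^\prime}^{(M)}\right)=\Psi_{1:\tau_\gamma(x)+M}(0)=Z^{(\tau_\gamma(x)+M)},$$
which, by the monotonicity of $\{Z^{(n)}\}_{n\in\mathbb{N}}$, rewrites the event in question as $\{\tau_\gamma(x)<\infty,\,T(x)\leq\tau_\gamma(x)+M\}$. The trivial inclusion gives $\limsup\leq 1$ for free, so it suffices to prove
$$\mathbb{P}(T(x)<\infty,\,T(x)-\tau_\gamma(x)>M)=o(\mathbb{P}(T(x)<\infty))\qquad\text{as }x\to\infty.$$

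I would first fix $\epsilon,\nu>0$ and pick $K$ via the preceding lemma. By the strong Markov property at $\tau_\gamma(x)$, the shifted sequence $\{(A_{\tau_\gamma(x)+j},B_{\tau_\gamma(x)+j},D_{\tau_\gamma(x)+j})\}_{j\geq 1}$ is an independent i.i.d.\ copy of the original, so the lemma applied to it yields a good event $G$ with $\mathbb{P}(G\,|\,\mathcal{F}_{\tau_\gamma(x)})\geq 1-\epsilon$ on $\{\tau_\gamma(x)<\infty\}$. Iterating the affine bounds \eqref{sec3eq7} along the shifted sequence shows that on $G$ the truncation remainder $R_M\triangleq Z^{\mathrm{sh}}-{Z^\prime}^{(M)}$, where $Z^{\mathrm{sh}}\triangleq\lim_{M\to\infty}{Z^\prime}^{(M)}\stackrel{d}{=}Z$, is bounded by a constant $\bar{R}_M=\bar{R}_M(\nu,K)$.

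Using $Z=\Psi_{1:\tau_\gamma(x)}(Z^{\mathrm{sh}})$ together with the monotonicity and Lipschitz property of $\Psi_{1:\tau_\gamma(x)}$, the bad event $\{T(x)<\infty,\,T(x)-\tau_\gamma(x)>M\}\cap G$ is contained in $\{x<Z\leq x+L_x\bar{R}_M\}$, where $L_x$ denotes the Lipschitz constant of $\Psi_{1:\tau_\gamma(x)}$. I would then decompose the pre-$\tau_\gamma(x)$ history according to whether the crossing of $S_n(\gamma)$ past $s(x)$ is driven by a large $\log A_{\tau_\gamma(x)}$ or a large $\log^+(B_{\tau_\gamma(x)}^+{+}D_{\tau_\gamma(x)})$, mirroring the big-$A$/big-$B$ dichotomy used in \citeA{dyszewski2016}. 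In the big-$B$ regime $L_x$ stays bounded, so the window $\{x<Z\leq x+C\}$ has probability $o(\bar{F}_I(\log x))$ by the long-tailedness of $\bar{F}_I$; in the big-$A$ regime $L_x\asymp e^{S_{\tau_\gamma(x)}}$ can be of order $x$, but then $L_x\bar{R}_M/x\to 0$ and a rescaling argument analogous to the one preceding Proposition~\ref{sec5corol1} shows that the conditional probability of the corresponding window vanishes as well.

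The main obstacle is the uniform control of $L_x$, which is not directly provided by the preceding lemma: iterating the Lipschitz bound produces $L_x\leq\prod_{j\leq\tau_\gamma(x)}\mathrm{Lip}(\Psi_j)$, whose behavior at the pre-$\tau_\gamma$ big jump needs to be handled. My plan is to augment the good event $G$ with a Palmowski--Zwart-type bound on $\sum_{j}\log^+\mathrm{Lip}(\Psi_j)$ and on $\log^+(B_j^+{+}D_j)$, exactly in the style of $E^{(3)}_n$ in the preceding lemma, using $\mathbb{E}[\log\mathrm{Lip}(\Psi_1)]<0$ and $\mathbb{E}[\log^+(B_1^+{+}D_1)]<\infty$ from Assumption~\ref{sec3ass2}. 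Combining these with the lower bound \eqref{sec4eq8} on $\mathbb{P}(T(x)<\infty)$ and then sending $\epsilon\to 0$ closes the argument.
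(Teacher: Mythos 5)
Your reformulation of the target event as $\{\tau_\gamma(x)<\infty,\ T(x)\leq\tau_\gamma(x)+M\}$ via the identity $\Psi_{1:\tau_\gamma(x)}\left({Z^\prime}^{(M)}\right)=Z^{(\tau_\gamma(x)+M)}$ and the monotonicity of $\{Z^{(n)}\}$ is correct, and pulling a good event $G$ for the shifted sequence from the preceding lemma is a sensible first move. But the route you take is genuinely different from the paper's, and the ``big-$A$ regime'' step has a real gap. Having included the bad event in $\{x<Z\leq x+L_x\bar R_M\}$ on $G$, you need this window to carry probability $o(\mathbb{P}(Z>x))$. When the crossing is driven by a large Lipschitz constant you yourself put $L_x$ of order $x$, in which case $L_x\bar R_M/x$ is of order $\bar R_M$--- a constant, not tending to $0$, contrary to what you assert. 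The ``rescaling argument analogous to the one preceding Proposition~\ref{sec5corol1}'' that you invoke to salvage this is carried out in the paper only for regularly varying $X_1$ (the paper remarks that the same works in the Gumbel domain). Theorem~\ref{sec5thm1} is stated for general subexponential integrated tails precisely in order to go beyond those cases, and the overshoot limit theorem your argument needs is neither provided in the paper nor easy to formulate in that generality.

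The paper's proof avoids overshoot analysis altogether. It works from below on the numerator: the events
$$E_n=E^{(1)}_n\cap E^{(2)}_n\cap E^{(3)}_n\cap\left\{\max(A_{n+1},\underline B_{n+1})>xe^{n(\mu+\nu)+L+K},\ \underline B_{n+1}\geq -xe^{n(\mu-\nu)-K}\right\}\cap\{{Z^\prime}^{(1)}>\nu\}$$
are disjoint, and on $E_n$ the forced overshoot factor $e^L$ at step $n+1$ guarantees both $\tau_\gamma(x)=n+1$ and $\Psi_{1:n+1}({Z^\prime}^{(1)})>x$, i.e.\ the crossing is reached one step after the big jump. Thus $\{\tau_\gamma(x)<\infty,\ \Psi_{1:\tau_\gamma(x)}({Z^\prime}^{(M)})>x\}\supseteq\bigcup_nE_n$ for every $M\geq1$, and $\sum_n\mathbb{P}(E_n)$ is bounded below---using only the long-tail property of $\bar F_I$, which absorbs the constant factor $e^{L+K}$---by an expression that tends to $\mathbb{P}(T(x)<\infty)$ as $\epsilon,\epsilon^\prime,\nu\to0$. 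Enforcing the constant overshoot factor inside $E_n$ is exactly the device that makes the refined overshoot distribution, which blocks your route, unnecessary.
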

\begin{proof}
Define
\begin{align*}
E_n=E^{(1)}_n\cap E^{(2)}_n\cap E^{(3)}_n\cap\left\{\max\left(A_{n+1},\underline{B}_{n+1}\right)>xe^{n(\mu+\nu)+L+K},\underline{B}_{n+1}\geq -xe^{n(\mu-\nu)-K}\right\}\cap\{{Z^\prime}^{(1)}>\nu\},
\end{align*}
where $L>0$ is chosen to be large enough, such that the sets $\{E_n\}_{n\geq0}$ are disjoint. Moreover, we can show that $E_n\subseteq\{\tau_\gamma(x)=n+1,\Psi_{1:\tau_\gamma(x)}({Z^\prime}^{(1)})>x\}\subseteq\{\tau_\gamma(x)<\infty,\Psi_{1:\tau_\gamma(x)}({Z^\prime}^{(M)})>x\}$. To see this, on $E_n$ we have that
\begin{align*}
\Psi_{1:n+1}({Z^\prime}^{(1)})&\geq\sum_{k=0}^{n-1}\underline{B}_{k+1}\prod_{j=1}^k A_j+\left(\underline{B}_{n+1}+{Z^\prime}^{(1)}A_{n+1}\right)\prod_{j=1}^n A_j\\
&\geq-\sum_{k=0}^{n-1}|\underline{B}_{k+1}|\prod_{j=1}^k A_j+\left(\underline{B}_{n+1}+xe^{n(\mu-\nu)-K}+{Z^\prime}^{(1)}A_{n+1}\right)\prod_{j=1}^n A_j-xe^{n(\mu-\nu)-K}\prod_{j=1}^n A_j\\
&\geq-\frac{e^{2K}}{1-e^{-\mu+2\nu}}+\min(\nu,1)\max\left(A_{n+1},\underline{B}_{n+1}+xe^{n(\mu-\nu)-K}\right)e^{-n(\mu+\nu)-K}-x\\
&\geq-\frac{e^{2K}}{1-e^{-\mu+2\nu}}+\min(\nu,1)\max\left(A_{n+1},\underline{B}_{n+1}\right)e^{-n(\mu+\nu)-K}-x\\
&\geq-\frac{e^{2K}}{1-e^{-\mu+2\nu}}+\min(\nu,1) x e^L-x>x,
\end{align*}
for sufficiently large $L$ that does not depend on $x$. Since $\{S_j(\gamma)\}_{j\leq n}$ is bounded by $K$, $\mu>\mu_\gamma$ and
\begin{align*}
S_{n+1}(\gamma)&=S_{n}(\gamma)+\log\left(\max\left(\bar{B}_{n+1}e^{-\gamma_{2}},A_{n+1}\right)\right)+\gamma_{1}\\
&>-n\left(\mu_\gamma+\nu\right)-K+\log\left(\max\left(\underline{B}_{n+1},A_{n+1}\right)\right)-\gamma_{2}+\gamma_{1}\\
&>\log x+n\left(\mu-\mu_\gamma\right)+L-\gamma_{2}+\gamma_{1}\\
&>\log x+L-\gamma_{2}+\gamma_{1}>s(x),
\end{align*}
for sufficiently large $L$ that does not depend on $x$, we can also conclude that $\tau_\gamma(x)=n+1<\infty$ by taking $x$ sufficiently large. This implies that
\begin{align}
\nonumber&\mathbb{P}\left(\tau_\gamma(x)<\infty,\Psi_{1:\tau_\gamma(x)}({Z^\prime}^{(M)})>x\right)
\nonumber\geq\sum_{n\geq0}\mathbb{P}(E_n)
\\
\nonumber
&\geq(1-\epsilon)\mathbb{P}({Z^\prime}^{(1)}>\nu)\sum_{n\geq0}\bigg\{\mathbb{P}\left(\max\left(A_1,\underline{B}_1\right)>xe^{n(\mu+\nu)+L+K}\right)
\\
&\hspace{130pt}
-\mathbb{P}\left(A_1>xe^{n(\mu+\nu)+L+K},\underline{B}_1<-xe^{n(\mu-\nu)+K}\right)\bigg\}.\label{SectionGCeq4}
\end{align}
From Assumption \ref{sec3ass2}c) we conclude that, for any $\epsilon^\prime>0$, by taking sufficiently large $x$, the following holds
\begin{align*}
\mathbb{P}\left(A_1>xe^{n(\mu+\nu)+L+K},\underline{B}_1<-xe^{n(\mu-\nu)+K}\right)&\leq \mathbb{P}\left(A_1>xe^{n(\mu-\nu)+K},\underline{B}_1<-xe^{n(\mu-\nu)+K}\right) \\
&\leq\epsilon^\prime\mathbb{P}\left(\max\left(A_1,\underline{B}_1\right)>xe^{n(\mu-\nu)+K}\right).
\end{align*}
Combining this with \eqref{SectionGCeq4}, we obtain that
\begin{align}
\nonumber&\mathbb{P}\left(\tau_\gamma(x)<\infty,\Psi_{1:\tau_\gamma(x)}({Z^\prime}^{(M)})>x\right)\\
\nonumber
&\geq(1-\epsilon)\mathbb{P}({Z^\prime}^{(1)}>\nu)\sum_{n\geq0}\bigg\{\mathbb{P}\left(\max\left(A_1,\underline{B}_1\right)>xe^{n(\mu+\nu)+L+K}\right)
\\
&\hspace{130pt}
-\epsilon^\prime\mathbb{P}\left(\max\left(A_1,\underline{B}_1\right)>xe^{n(\mu-\nu)+K}\right)\bigg\}.\label{SectionGCeq5}
\end{align}
Since $\mathbb{P}\left(\max\left(A_1,\underline{B}_1\right)>y\right)$ is decreasing in $y$, we observe that, for any $n$,
$$\mathbb{P}\left(\max\left(A_1,\underline{B}_1\right)>xe^{n(\mu+\nu)+L+K}\right)\geq\frac{1}{\mu+\nu}\int_{\log x+L+K+n(\mu+\nu)}^{\log x+L+K+(n+1)(\mu+\nu)}\mathbb{P}\left(\log\max\left(A_1,\underline{B}_1\right)>y\right)dy,$$
and that
$$\mathbb{P}\left(\max\left(A_1,\underline{B}_1\right)>xe^{n(\mu-\nu)+K}\right)\leq\frac{1}{\mu-\nu}\int_{\log x+K+(n-1)(\mu-\nu)}^{\log x+K+n(\mu-\nu)}\mathbb{P}\left(\log\max\left(A_1,\underline{B}_1\right)>y\right)dy.$$
Moreover, using the fact that $\bar{F}_I$ is long tailed, we obtain from \eqref{SectionGCeq5} that
\begin{align}
\nonumber&\mathbb{P}\left(\tau_\gamma(x)<\infty,\Psi_{1:\tau_\gamma(x)}({Z^\prime}^{(M)})>x\right)\\
\nonumber
&\geq(1-\epsilon)\mathbb{P}({Z^\prime}^{(1)}>\nu)\left(\frac{1}{\mu+\nu}\bar{F}_I(\log x+L+K)-\frac{\epsilon^\prime}{\mu-\nu}\bar{F}_I(\log x+L+K-(\mu-\nu))\right)\\
\nonumber
&\sim(1-\epsilon)\mathbb{P}({Z^\prime}^{(1)}>\nu)\left(\frac{1}{\mu+\nu}-\frac{\epsilon^\prime}{\mu-\nu}\right)\bar{F}_I(\log x)
\\
&\sim\mu(1-\epsilon)\mathbb{P}({Z^\prime}^{(1)}>\nu)\left(\frac{1}{\mu+\nu}-\frac{\epsilon^\prime}{\mu-\nu}\right)\mathbb{P}(T(x)<\infty)\label{SectionGCeq2}.
\end{align}
where in \eqref{SectionGCeq2} we use \citeA[Theorem 3.1]{dyszewski2016}. Letting $\epsilon,\epsilon^\prime,\nu\to0$ we obtain the result. This result implies that the relative bias converge to $0$, since the numerator in \eqref{SectionGCeq1} is always smaller than the denominator.
\end{proof}

Let the conditions in Theorem \ref{sec4thm1} and Theorem \ref{sec5thm1} be satisfied. The following algorithm for estimating $\mathbb{P}(Z>x)$ has bounded relative error and vanishing relative bias as $x\to \infty$.  
\begin{algm}
\begin{description}
\item[]
\begin{description}
\item[]
\item[STEP 0.] For fixed $\delta\in(0,1)$, set $a_*\longleftarrow a_*(\delta)\leq0$ satisfying \eqref{sec2eq3}.
\item[STEP 1.] Initialize $s\longleftarrow0$, $z\longleftarrow 1$ and $L\longleftarrow1$.
\item[STEP 2.] Set $s^\prime\longleftarrow s$ and $z^\prime\longleftarrow z$. Run Algorithm \ref{sec2algm1} until the random walk $S_n(\gamma)$ crosses $s(x)$. Meanwhile, update $s$ and $L$ according to STEP 2 of Algorithm \ref{sec2algm1}, then update $z$ via the backward iteration.
\item[STEP 3.] Set $i\longleftarrow0$, $s^\prime\longleftarrow s$ and $z^\prime\longleftarrow z$. While $i<M$, update $z$ via the backward iteration, $z^\prime\longleftarrow z$ and $i\longleftarrow i+1$. 
\item[STEP 4.] If $z>x$ then return $L$. Otherwise, return $0$.
\end{description}
\end{description}
\end{algm}

\section {Truncation Index and Unbiased Estimator}\label{SectionTIUE}
In this section we analyze the asymptotic behavior of the relative bias as $M\to\infty$ for fixed $x$,
based on which we propose an unbiased estimator for $\mathbb{P}(Z>x)$ using the technique studied in \citeA{rheeglynn2015}. We first go back to the Example \ref{sec3ex1}---i.e., the stochastic perpetuity example with $B_n=1$.

\addtocounter{example}{-1}
\begin{example}[continued]
Consider the Markov chain given by \eqref{sec3eq0}. From \eqref{sec5eq1} we know that in order to analyze the relative bias we need to consider the term $\Theta(x,M)$, which is given by
$$\Theta(x,M)=\mathbb{P}\left(A^\prime_x{Z^\prime}^{(M)}+B^\prime_x\leq x,A^\prime_xZ^\prime+B^\prime_x>x\, \middle|\, \tau_\gamma(x)<\infty\right).$$
Let $\mathbb{P}^{(x)}\left(\cdot\right)$ denote the conditional probability of $\mathbb{P}\left(\cdot\,\middle|\,\tau_\gamma(x)<\infty\right)$ and $\mathbb E^{(x)}$ the corresponding expectation operator.
Note that
\begin{align}
\nonumber\Theta(x,M)
&=\int \mathbbm{1}_{\left\{{Z^\prime}^{(M)}\leq \frac{x-B^\prime_x}{A^\prime_x},Z^\prime>\frac{x-B^\prime_x}{A^\prime_x}\right\}}d\mathbb{P}^{(x)}\\
\nonumber&=\int \mathbb{P}^{(x)}\left({Z^\prime}^{(M)}\leq y,Z^\prime>y\right) \mathbb{P}^{(x)}_{\xi_x}\left(dy\right)\\
&=\int \left\{\mathbb{P}^{(x)}\left(Z^\prime>y\right)-\mathbb{P}^{(x)}\left({Z^\prime}^{(M)}>y\right)\right\} \mathbb{P}^{(x)}_{\xi_x}\left(dy\right),
\label{sec6eq1}
\end{align}
where \eqref{sec6eq1} follows from the fact that $\left\{{Z^\prime}^{(M)}>y\right\}\subseteq\left\{Z^\prime>y\right\}$. Using the strong Markov property we have that ${Z^\prime}^{(M)}\,{\buildrel d \over =}\,Z^{(M)}$ and $Z^\prime\,{\buildrel d \over =}\,Z$ under $\mathbb{P}^{(x)}$. Therefore, we can write \eqref{sec6eq1} as
\begin{align*}
\Theta(x,M)&=\int \left\{\mathbb{P}\left(Z>y\right)-\mathbb{P}\left(Z^{(M)}>y\right)\right\} \mathbb{P}^{(x)}_{\xi_x}\left(dy\right).
\end{align*}
Combining this with the fact that the backward iteration $Z^{(M)}$ has the same distribution as $Z_M$, we obtain that
\begin{equation}\label{sec6eq2}
\Theta(x,M)=\int \left\{\mathbb{P}\left(Z>y\right)-\mathbb{P}\left(Z_M>y\right)\right\} \mathbb{P}^{(x)}_{\xi_x}\left(dy\right) \leq d_{TV}(Z_M, Z)
\end{equation}
where $d_{TV}$ denotes the total variation distance. 
To get a handle on this quantity, we apply the Lyapunov criterion in \citeA[Theorem 3.6]{jarnerroberts2002}, which implies a polynomial convergence rate of the $M$-step transition kernel to the invariant distribution in the total variation norm. 
We assume that the Markov chain $\{Z_n\}_{n\in\mathbb{N}}$ given by \eqref{sec3eq0} is irreducible and aperiodic; this is the case, for example, if $A_1$ has a Lebesgue density (\citeNP[Lemma 2.2.2]{buraczewskidamekmikosch2016}). Moreover, assume that there exists an integer $q\geq2$ such that $\mathbb{E}|X_1|^q<\infty$.
In order to establish the Lyapunov condition, let $V(x)=1\vee(\log x)^q$. 
Note that $V(x)=(\log x)^q \mathbbm 1_{\{x> e\}} + \mathbbm 1_{\{x\leq e\}}$ and hence the binomial expansion gives
\begin{align*}
PV(x)
&=
\mathbb{E}\big[\left(\log\left(A_1x+1\right)\right)^q\I{A_1x+1>e}+ \I{A_1x +1\leq e}\big]
\\
&=\E\left[ \left(\log\frac{A_1 x +1}{x} + \log x\right)^q\I{A_1x + 1> e} + \I{A_1x +1\leq e}\right]
\\
&=
\E\left[(\log x)^q \I{A_1x+1>e} + \sum_{i=1}^q {q \choose i}(\log x)^{q-i} \left(\log\frac{A_1x + 1}{x}\right)^{i} \I{A_1x +1 > e}+ \I{A_1x+1 \leq e}\right]
\\
&=
V(x) + \E(\log x)^q (\I{A_x x + 1>e} - \I{x>e}) + \E (\I{A_1 x+1 \leq e}-\I{x\leq e})
\\
&\hspace{15pt} 
+ q(\log x)^{q-1} \E \left(\log \frac{ A_1 x+1}{x}\I{A_1 x + 1 > e}\right)
+\E\sum_{i=2}^q {q \choose i}(\log x)^{q-i} \left(\log\frac{A_1x + 1}{x}\right)^{i} \I{A_1x +1 > e}
\end{align*}
For $x > e$,
\begin{align*}
PV(x) 
&\leq V(x) + \P(A_1 x + 1 \leq e)
+ q(\log x)^{q-1} \E \left(\log \frac{ A_1 x+1}{x}\I{A_1 x + 1 > e}\right)
\\
&\hspace{180pt}
+\sum_{i=2}^q {q \choose i}(\log x)^{q-i} \,\E\left[\left(\log\frac{A_1x + 1}{x}\right)^{i} \I{A_1x +1 > e}\right].
\end{align*}
Note that $\left|\log \frac{ A_1 x+1}{x}\I{A_1 x + 1 > e}\right| \leq |\log A_1|+C$ for some constant $C$ and noting that the right-hand-side doesn't depend on $x$ and has finite $q$-th moment, 
there has to be $c_i$'s such that 
$$
\sum_{i=2}^q {q \choose i}(\log x)^{q-i} \,\E\left[\left(\log\frac{A_1x + 1}{x}\right)^{i} \I{A_1x +1 > e}\right]
\leq \sum_{i=0}^{q-2} c_i (\log x)^{i} \leq \epsilon (\log x)^{q-1}
$$
for sufficiently large $x$.
On the other hand, note that $\log \frac{ A_1 x+1}{x}\I{A_1 x + 1 > e}$ converges to $X_1 = \log A_1$ almost surely as $x\to \infty$ and hence by dominated convergence $\E \left(\log \frac{ A_1 x+1}{x}\I{A_1 x + 1 > e}\right)\to \E \log A_1 < 0$. Therefore, for any fixed $\epsilon>0$, 
$$
q(\log x)^{q-1} \E \left(\log \frac{ A_1 x+1}{x}\I{A_1 x + 1 > e}\right)  \leq (q\E X_1 + \epsilon) (\log x)^{q-1}
$$
for sufficiently large $x$. Choosing $\epsilon$ so that $q\E X_1 + 3\epsilon <0$ and noting that $\P(A_1x+1\leq e)\to 0$ as $x\to\infty$, as well as $(\log x)^{q-1} =\big( (\log x)^q \I{x>e} + \I{x\leq e}\big)^{\frac{q-1}{q}}$ for $x>e$, we conclude that there exists $K$ such that
\begin{align*}
PV(x) 
&\leq 
V(x) + \epsilon(\log x)^{q-1} + (q\E X_1 + \epsilon)(\log x)^{q-1} + \epsilon(\log x)^{q-1}
\\
&\leq
V(x) - cV^{({q-1})/{q}}(x)
\end{align*}
for $x>K$, where $c = -(q\E X_1 + 3\epsilon)>0$.
Finally, since $PV(x)$, $V(x)$ and $V^{(q-1)/q}(x)$ are bounded on $C=[0,K]$, there exists a constant $b$ such that
$$PV(x)\leq V(x)-cV^{(q-1)/q}(x)+b\mathbbm{1}_{C},$$
which is the sufficient condition in \citeA[Theorem 3.6]{jarnerroberts2002} for polynomial ergodicity; we conclude that the $M$-step transition kernel converges to the stationary distribution in the total variation norm at a polynomial rate with order $q-1$, i.e., there exists a constant $\kappa^\prime$ satisfying
\begin{equation}\label{polynomial_convergence}
\Theta(x,M)\leq d_{TV}(Z_M,Z)<\kappa M^{-(q-1)},
\end{equation}
for all $M\in\mathbb{N}$. 
It should be noted that an exact expression of the constant $\kappa$ can be obtained in a few special cases---for example, see e.g.\ \citeA{doucmoulinessoulier2007}, \citeA{kalashnikovtsitsiashvili1999} and the references therein. However, applying the method studied in \citeA{rheeglynn2015} we can get rid of this constant altogether and obtain an unbiased, strongly efficient estimator. 
In order to apply the method, a sufficient condition is to bound
\begin{equation}\label{sec6eq5}
\frac{\mathbb{E}^{Q^\gamma_{a_*}}(L_T^\Delta(x,M)-L_T(x))^2}{\mathbb{P}(Z>x)^2}
\end{equation}
by a decreasing function of $M$ independent of $x$. Once we can have such a bound, we can construct an unbiased estimator that is given by
\begin{equation}\label{sec6eq8}
L^{\text{RG}}_T(x)\triangleq\sum_{i=0}^N \frac{L_T^\Delta(x,2^i) - L_T^\Delta(x,2^{i-1})}{\mathbb{P}(N\geq i)},
\end{equation}
whose second moment is
$$\sum_{i=0}^\infty \frac{\mathbb{E}^{Q^\gamma_{a_*}}(L_T^\Delta(x,2^{i-1})-L_T(x))^2-\mathbb{E}^{Q^\gamma_{a_*}}(L_T^\Delta(x,2^i)-L_T(x))^2}{\mathbb{P}(N\geq i)},$$
where $N$ is a random truncation index independent of everything else and $L_T^\Delta(x,2^i)$ is interpreted as $0$ if $i<0$ (for details see \citeNP[Theorem 1]{rheeglynn2015}). It turns out that such a bound on \eqref{sec6eq5} can be derived easily, if
\begin{equation}\label{sec6eq6}
\sup_{x>0} \frac{\mathbb{E}^{Q^\gamma_{a_*}}L_T^{2+\epsilon}(x)}{\mathbb{P}(Z>x)^{2+\epsilon}} < \infty
\end{equation}
holds for some $\epsilon>0$. In case this is possible for some positive $\epsilon>0$, we can proceed as follows: let $\mathcal{E}_x(i)= A'_xZ'^{(2^i)} + B'_x$ and $\mathcal{E}_x = A'_xZ'+B'_x$. For $\beta\in(0,1)$, using the H\"{o}lder's inequality we get that
\begin{align}
\nonumber\frac{\mathbb{E}^{Q^\gamma_{a_*}}\left[(L_T^\Delta(x,2^i)-L_T(x))^2\right]}{P(Z>x)^2}=&\frac{\mathbb{E}^{Q^\gamma_{a_*}}\left[\mathbbm{1}_{\{\tau_\gamma(x)<\infty, \mathcal{E}_x(i) \leq x, \mathcal{E}_x > x\}}  (M_{\tau_\gamma}^{-1}(x))^{2}\right]}{\mathbb{P}(Z>x)^2}\\
\nonumber=&\frac{\mathbb{E}^{Q^\gamma_{a_*}} \left[\left(\mathbbm{1}_{\{\tau_\gamma(x)<\infty, \mathcal{E}_x(i) \leq x, \mathcal{E}_x > x\}}M_{\tau_\gamma}^{-1}(x)\right)^{\beta}\left(\mathbbm{1}_{\{T(x)<\infty\}} M_{\tau_\gamma}^{-1}(x)\right)^{2-\beta}\right]}{\mathbb{P}(Z>x)^2}\\
\nonumber\leq&\frac{\mathbb{E}^{Q^\gamma_{a_*}}\left[\mathbbm{1}_{\{\tau_\gamma(x)<\infty, \mathcal{E}_x(i) \leq x, \mathcal{E}_x > x\}}  M_{\tau_\gamma}^{-1}(x)\right]^\beta}{\mathbb{P}(Z>x)^\beta}\frac{\mathbb{E}^{Q^\gamma_{a_*}}\left[ \mathbbm{1}_{\{T(x)<\infty\}} M_{\tau_\gamma}^{-1}(x)^{\frac{2-\beta}{1-\beta}}\right]^{1-\beta}}{\mathbb{P}(Z>x)^{2-\beta}}\\
=&\left[\frac{\mathbb{E}^{Q^\gamma_{a_*}} \mathbbm{1}_{\{\tau_\gamma(x)<\infty, \mathcal{E}_x(i) \leq x, \mathcal{E}_x > x\}}  M_{\tau_\gamma}^{-1}(x)}{\mathbb{P}(Z>x)}\right]^\beta\left[\frac{\mathbb{E}^{Q^\gamma_{a_*}} L_T^{\frac{2-\beta}{1-\beta}}(x)}{\mathbb{P}(Z>x)^\frac{{2-\beta}}{1-\beta}}\right]^{1-\beta}.\label{sec6eq9}
\end{align}
The first term in \eqref{sec6eq9} is bounded by $\left(\kappa2^{-i(q-1)}\right)^\beta$ due to \eqref{polynomial_convergence}, and the second term is uniformly bounded w.r.t.\ $x$. 
Therefore, the second moment relative to $P(Z>x)^2$ can be bounded uniformly w.r.t.\ $x$, if we choose $\beta$ and $N$ in a suitable way. 
For example, setting $\beta \triangleq \frac{1+2\epsilon}{q-1}$ where $\epsilon>0$ is small enough to ensure that $\beta<1$, and then $N$ such that $\mathbb{P}(N\geq i)=2^{-i(1+\epsilon)}$, one can achieve the purpose. Finally, we are left with verifying \eqref{sec6eq6}, which turns out to be possible by adapting the idea as described in \citeA{blanchetglynn2008}. Using the same argument as in the proof of Theorem \ref{sec4thm1}, it is sufficient to analyze the term that is given by
\begin{equation}\label{sec6eq7}
\frac{\mathbb{E}^{Q^\gamma_{a_*}}L_{\tau_\gamma}^{2+\epsilon}(x)}{\mathbb{P}\left(\tau_\gamma(x)<\infty\right)^{2+\epsilon}}.
\end{equation}
The following Lemma, which can be considered as an extension of Theorem \ref{sec2thm1}, proves that the estimator $L_T^{RG}(x)$ defined in \eqref{sec6eq8} is an unbiased estimator of $\P(Z>x)$ and is strongly efficient. 

\begin{lemma}\label{sec6lem1}
Suppose that $\mathbb{E}X_1<0$ and $X_1$ belongs to $S^*$. Fix $\gamma\in(0,-\mathbb{E}X_1)$. Let $v_\gamma$ and $w_\gamma$ be defined as in \eqref{sec4eq4} and \eqref{sec4eq5}. Let $\epsilon>0$. 
For fixed $\delta\in(0,1)$, one can choose $a_*=a_*(\delta)\leq0$ so that 
$$-\delta\leq\frac{v^{2+\epsilon}_\gamma(y)-w_\gamma^{2+\epsilon}(y)}{\mathbb{P}(X_1>-y)w^{1+\epsilon}_\gamma(y)},\quad \forall y\leq s(x)+a_*.$$
Let 
$$L_{\tau_\gamma}(x)\triangleq \mathbbm{1}_{\{\tau_\gamma(x)<\infty\}}\prod_{k=1}^{\tau_\gamma(x)}\frac{w_\gamma(S_{k-1}+a_*)}{v_\gamma(S_k+a_*)}.$$
Then $L_{\tau_\gamma}(x)$ is an unbiased estimator of $\mathbbm{P}(\max_{n\in\mathbbm{N}}S_n>s(x))$ under $\P^{Q_{{a_*}}}$, and 
$$\sup_{x>0}\frac{\mathbb{E}^{Q_{a_*}}L_{\tau_\gamma}^{2+\epsilon}(x)}{\mathbb{P}\left(\max\limits_{n\geq0}S_n>s(x)\right)^{2+\epsilon}}<\infty,$$
where $\P^{Q_{{a_*}}}$ denotes the probability measure associated with the random walk $\{S_n\}_{n\in\mathbb{N}}$ having the one-step transition kernel
$$Q_{a_*}(y,dz)=P(y,dz)\frac{v_\gamma(z+a_*)}{w_\gamma(y+a_*)},$$
and $\mathbb{E}^{Q_{a_*}}$ denotes the corresponding expectation operator.
\end{lemma}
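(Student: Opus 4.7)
The plan is to reprise the proof of Theorem \ref{sec2thm1} (Blanchet--Glynn 2008, Theorem 3) with every square replaced by the exponent $2+\epsilon$, the hypothesized inequality on $v_\gamma^{2+\epsilon}-w_\gamma^{2+\epsilon}$ taking the role of \eqref{sec2eq3}. Three steps are needed: unbiasedness, existence of $a_*$, and the $(2+\epsilon)$-th moment bound.

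For unbiasedness, set $M_n\triangleq\prod_{k=1}^n v_\gamma(S_k+a_*)/w_\gamma(S_{k-1}+a_*)$. The identity $w_\gamma(y+a_*)=\int v_\gamma(z+a_*)P(y,dz)$ is built into the definitions \eqref{sec4eq4}--\eqref{sec4eq5}, so $(M_n)_{n\in\mathbb{N}}$ is a non-negative $\mathbb{P}$-martingale with $\mathbb{E}M_0=1$. Invoking Lemma \ref{sec2lem1} at the stopping time $\tau_\gamma(x)$ yields $\mathbb{P}(\max_{n\geq 0}S_n>s(x))=\mathbb{P}(\tau_\gamma(x)<\infty)=\mathbb{E}^{Q_{a_*}}L_{\tau_\gamma}(x)$, i.e.\ unbiasedness.

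For the existence of $a_*$, use the factorisation
\begin{equation*}
v_\gamma^{2+\epsilon}(y)-w_\gamma^{2+\epsilon}(y)=(v_\gamma(y)-w_\gamma(y))\int_0^1 (2+\epsilon)(tv_\gamma(y)+(1-t)w_\gamma(y))^{1+\epsilon}\,dt.
\end{equation*}
Since $v_\gamma,w_\gamma\leq 1$, the integral factor is bounded by $2+\epsilon$, and $w_\gamma(y)-v_\gamma(y)=\omicron(\mathbb{P}(X_1>-y))$ as $y\to-\infty$ (the same proof as \citeA[Proposition 3]{blanchetglynn2008} applies verbatim). Thus the ratio appearing in the statement tends to $0$ as $y\to-\infty$, so $a_*(\delta)$ exists.

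For the moment bound, pick the Lyapunov function $g(y)\triangleq v_\gamma(y+a_*)^{2+\epsilon}$ and set $U_n\triangleq L_n^{2+\epsilon}g(S_n)$ with $L_n\triangleq\prod_{k=1}^{n} w_\gamma(S_{k-1}+a_*)/v_\gamma(S_k+a_*)$. A direct computation under $Q_{a_*}$ gives $\mathbb{E}^{Q_{a_*}}[U_{n+1}\mid\mathcal{F}_n]=L_n^{2+\epsilon}w_\gamma^{2+\epsilon}(S_n+a_*)$, so the one-step drift of $U_n$ equals $L_n^{2+\epsilon}(w_\gamma^{2+\epsilon}-v_\gamma^{2+\epsilon})(S_n+a_*)$, which by the hypothesised inequality is at most $\delta L_n^{2+\epsilon}\mathbb{P}(X_1>-(S_n+a_*))w_\gamma^{1+\epsilon}(S_n+a_*)$ on $\{S_n\leq s(x)\}$. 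Following Proposition 2 and Theorem 2 of \citeA{blanchetglynn2008}, one shows that this positive drift is absolutely summable along $\{S_n\}$ under $Q_{a_*}$ (this is where subexponentiality of $X_1\in S^*$ enters) and dominated by a multiple of $g(0)$; optional stopping then yields $\mathbb{E}^{Q_{a_*}}[L_{\tau_\gamma}^{2+\epsilon}g(S_{\tau_\gamma})]\leq Cg(0)=Cv_\gamma(a_*)^{2+\epsilon}$ with $C$ independent of $x$. Since $v_\gamma$ is non-decreasing and $S_{\tau_\gamma}>s(x)$, we have $g(S_{\tau_\gamma})\geq v_\gamma(s(x)+a_*)^{2+\epsilon}=\mathbb{P}(W_\gamma>-a_*)^{2+\epsilon}$, a positive constant independent of $x$; similarly, $v_\gamma(a_*)=\mathbb{P}(W_\gamma>s(x)-a_*)\sim c\mathbb{P}(\max_{n\geq 0}S_n>s(x))$ by \eqref{sec2eq5}. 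Combining these two facts yields the claimed uniform bound. The hard part will be the drift-accumulation step, in which the positive excess $\delta\mathbb{P}(X_1>-(S_n+a_*))w_\gamma^{1+\epsilon}(S_n+a_*)$ must be summed along the random-walk path and shown to stay bounded by $g(0)$ up to a multiplicative constant---this is exactly the place where the $(2+\epsilon)$-generalisation of the Blanchet--Glynn Lyapunov argument requires the bulk of the technical work.
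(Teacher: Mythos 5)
Your unbiasedness step and your existence-of-$a_*$ step are both correct; the latter is argued by a factorisation of $v^{2+\epsilon}-w^{2+\epsilon}$ rather than (as in the paper) by directly invoking $w-v=\omicron(w)$ together with the argument of \citeA[Theorem~3]{blanchetglynn2008}, but these are equivalent. The moment bound, however, has a genuine gap. With your Lyapunov function $g(y)=v_\gamma^{2+\epsilon}(y+a_*)$ (i.e.\ $h\equiv 1$), the hypothesised inequality only yields the \emph{nonnegative} drift bound
$$\mathbb{E}^{Q_{a_*}}\left[U_{n+1}-U_n\mid\mathcal{F}_n\right]\leq\delta L_n^{2+\epsilon}\,\mathbb{P}\bigl(X_1>-(S_n+a_*)\bigr)\,w_\gamma^{1+\epsilon}(S_n+a_*),$$
so $(U_n)$ is not a $Q_{a_*}$-supermartingale and optional stopping does not apply directly. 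You recognise this ("the hard part") and propose to absorb the accumulated excess by summing it along the path, attributing that step to Proposition~2 and Theorem~2 of \citeA{blanchetglynn2008}. But that is not what those results do, and there is no straightforward way to bound that random sum by a multiple of $g(0)$ uniformly in $x$.

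What the paper (and Blanchet--Glynn) actually do is choose the Lyapunov function $g(y)=v_\gamma^{2+\epsilon}(y+a_*)\,h(y)$ with $h$ the two-level step function $h(y)=\mathbbm{1}_{(-\infty,s(x)-a_*]}(y)+(1-\delta)\mathbbm{1}_{(s(x)-a_*,\infty)}(y)$. The $\delta$-drop of $h$ when the walk crosses the level $s(x)-a_*$ contributes exactly the negative drift needed to cancel the $\delta$ excess you identified, so the pair $L_n^{2+\epsilon}g(S_n)$ becomes a genuine supermartingale under $Q_{a_*}$; the hypothesised inequality on $(v_\gamma^{2+\epsilon}-w_\gamma^{2+\epsilon})/(\mathbb{P}(X_1>-y)w_\gamma^{1+\epsilon})$ is precisely what makes the resulting supermartingale condition \eqref{appendixeq1} hold. (The paper packages this in $\mathbb{P}$-measure, using $\mathbb{E}^{Q_{a_*}}L_{\tau_\gamma}^{2+\epsilon}=\mathbb{E}L_{\tau_\gamma}^{1+\epsilon}$ and Proposition~\ref{appendixprop1}, which is equivalent to the $Q_{a_*}$-supermartingale formulation above.) So the missing idea is not a harder "drift-accumulation" lemma---it is simply the step-function factor $h$, without which your $(U_n)$ is not a supermartingale in the first place.
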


\begin{proof}
The proof including the existence of $a_*$ can be found in Appendix B.
\end{proof}

\end{example}

Note that the above discussion can easily be extended to cover the general stochastic recursion. We conclude this section with the following theorem. 

\begin{thm}\label{sec6thm1}
Suppose that $\mathbb{E}S_1(\gamma)<0$, $S_1(\gamma)$ belongs to $S^*$ and Assumption 2 holds. Let $v_\gamma$ and $w_\gamma$ be defined as in \eqref{sec4eq4} and \eqref{sec4eq5}. For fixed $\delta\in(0,1)$ and $\beta\in(0,1)$, set $a_*=a_*(\delta)\leq0$ satisfying
$$-\delta\leq\frac{v^{\frac{2-\beta}{1-\beta}}_\gamma(y)-w_\gamma^{\frac{2-\beta}{1-\beta}}(y)}{\mathbb{P}(X_1>-y)w^{\frac{1}{1-\beta}}_\gamma(y)},\quad \forall y\leq s(x)+a_*.$$
Moreover, assume that $\E |\log A_1|^q + \E|\log \bar B_1|^q< \infty$. Then, it is possible to choose $N$ independently of $x$, such that
$$\sum_{i=0}^\infty \frac{\mathbb{E}^{Q^\gamma_{a_*}}(L_T^\Delta(x,2^{i-1})-L_T(x))^2}{\mathbb{P}(Z>x)^2\mathbb{P}(N\geq i)}$$
converges, and hence, the estimator $L^{RG}_T(x)$ defined in \eqref{sec6eq8} is unbiased and strongly efficient.
\end{thm}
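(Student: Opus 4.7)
The plan is to follow the blueprint laid out in Example~\ref{sec3ex1} and lift it to the general iterated random function setting. The argument has three ingredients: (i) a polynomial convergence rate of $Z^{(M)}$ to $Z$ in total variation, (ii) a uniform $(2+\epsilon)$-th moment bound for $L_{\tau_\gamma}$ relative to $\mathbb{P}(\max_n S_n(\gamma)>s(x))^{2+\epsilon}$, and (iii) a H\"{o}lder interpolation that combines (i) and (ii) to show the infinite sum defining the second moment of $L^{RG}_T(x)$ converges uniformly in $x$. Once the sum converges, unbiasedness and strong efficiency follow from \citeA[Theorem 1]{rheeglynn2015}.

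First, I would establish a polynomial ergodicity estimate of the form $d_{TV}(Z^{(M)},Z)\leq \kappa M^{-(q-1)}$ uniformly in the initial condition, by verifying the Jarner--Roberts Lyapunov condition for $V(x)=1\vee(\log(1+|x|))^q$. The moment hypothesis $\E|\log A_1|^q+\E|\log \bar{B}_1|^q<\infty$ is used to control the binomial expansion of $PV(x)$ exactly as in the scalar stochastic perpetuity calculation; Assumption~\ref{sec3ass2}a) combined with the envelope \eqref{sec3eq7} ensures that the dominant drift term is $q\E\log\mathrm{Lip}(\Psi_1)<0$, while the remaining terms contribute at most $o((\log x)^{q-1})$. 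This yields $PV(x)\leq V(x)-cV^{(q-1)/q}(x)+b\mathbbm{1}_C$ on a small set $C$, so \citeA[Theorem 3.6]{jarnerroberts2002} gives the desired polynomial rate. Using the strong Markov property at $\tau_\gamma(x)$ as in \eqref{sec6eq1}--\eqref{sec6eq2}, this controls $\Theta(x,M)$ uniformly in $x$.

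Second, I would invoke Lemma~\ref{sec6lem1} (applied to the upper-bounding random walk $S_n(\gamma)$) to obtain
$$\sup_{x>0}\frac{\mathbb{E}^{Q^\gamma_{a_*}}L_{\tau_\gamma}^{2+\epsilon}(x)}{\mathbb{P}(\max_n S_n(\gamma)>s(x))^{2+\epsilon}}<\infty,$$
for any prescribed $\epsilon>0$, which, combined with the uniform bound $\mathbb{P}(\max_n S_n(\gamma)>s(x))/\mathbb{P}(Z>x)<\infty$ established in the proof of Theorem~\ref{sec4thm1}, upgrades to a uniform $(2+\epsilon)$-th moment bound on $L_T$ relative to $\mathbb{P}(Z>x)^{2+\epsilon}$. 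The choice of $a_*=a_*(\delta)$ in the hypothesis is precisely what Lemma~\ref{sec6lem1} needs (with exponent $(2-\beta)/(1-\beta)$ playing the role of $2+\epsilon$ for $\epsilon=1/(1-\beta)-1$).

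Third, I would apply the H\"{o}lder interpolation \eqref{sec6eq9} verbatim, substituting $\mathcal{E}_x$ and $\mathcal{E}_x(i)$ by $\Psi_{1:\tau_\gamma(x)}(Z')$ and $\Psi_{1:\tau_\gamma(x)}(Z'^{(2^i)})$ respectively. This yields
$$\frac{\mathbb{E}^{Q^\gamma_{a_*}}(L_T^\Delta(x,2^i)-L_T(x))^2}{\mathbb{P}(Z>x)^2}\leq C\bigl(\kappa\, 2^{-i(q-1)}\bigr)^\beta,$$
with a constant $C$ independent of $x$. Choosing $\beta=(1+2\epsilon)/(q-1)\in(0,1)$ and $\mathbb{P}(N\geq i)=2^{-i(1+\epsilon)}$ with $\epsilon>0$ small enough that $\beta(q-1)>1+\epsilon$, the geometric series defining the second moment of $L_T^{RG}(x)$ converges uniformly in $x$, giving both unbiasedness and strong efficiency.

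The main obstacle I anticipate is the verification of the Lyapunov drift condition in the general iterated random function setting: one must show that the envelope \eqref{sec3eq7} and the moment assumption on $\log\bar B_1$ are enough to absorb the contributions of $B_n^+ + D_n$ in $PV$, and that the drift from $q\E\log\mathrm{Lip}(\Psi_1)<0$ dominates. A careful separation of the cases $|z|$ large versus $z$ bounded, together with the standard bound $\Psi_n(z)\leq A_n z^+ + (B_n^+ + D_n)$, should suffice, but the bookkeeping is delicate; everything else is mechanical.
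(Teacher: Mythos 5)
Your proposal follows essentially the same route as the paper: the paper proves Theorem~\ref{sec6thm1} by appealing to the worked example that precedes it, which combines exactly your three ingredients---polynomial ergodicity of $Z^{(M)}$ via the Jarner--Roberts Lyapunov criterion under the $q$-th moment hypothesis, the uniform $(2+\epsilon)$-th relative moment bound from Lemma~\ref{sec6lem1}, and the H\"older interpolation \eqref{sec6eq9} with the same parameter choices $\beta=(1+2\epsilon)/(q-1)$ and $\mathbb{P}(N\geq i)=2^{-i(1+\epsilon)}$. You also correctly identify that the only genuinely new work in the general iterated-random-function setting is re-verifying the drift inequality for $PV$, which the paper itself elides with ``the above discussion can easily be extended.''
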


\begin{rmk}
Note that, in general, the bias elimination scheme of \citeA{rheeglynn2015} is not guaranteed to produce non-negative estimators, which might not be ideal in the context of estimating (rare event) probabilities. However, in our case, $L_T^\Delta(x,M)$ increases w.r.t.\ $M$, and hence, the resulting unbiased estimator $L^{RG}_T(x)$ is always non-negative. 
\end{rmk}

\section{Numerical Results}
Here we investigate our algorithm numerically based on a stochastic perpetuity with $B_n=1$. We consider the increment $X_n \,{\buildrel d \over =}\, \mathcal W -3/2$ where $\mathcal{W}$ is a random variable with Weibull distribution:
$$\mathbbm{P}(\mathcal{W}>t)=\exp\left(-2t^{1/2}\right).$$
For the algorithmic parameters, we chose $a_*=-10$, $\gamma=0.5$. Moreover, we use a geometric distributed random truncation index with parameter $0.5$. Figure \ref{sec4fig1} shows the change of estimated probability with respect to the different choice of $M$ for 4 different values of $x=10^8$, $x=10^{16}$, $x=10^{32}$, and $x=10^{64}$ in each of the four plots. One can see that the estimated probability stabilizes as $M$ grows, which suggests that our estimator is consistent as $M\to \infty$. Comparing the four plots, one can also tell that the initial bias for small $M$ decrease as $x$ increases, which is consistent with the conclusion of Theorem \ref{sec5thm1} (vanishing relative bias). Table 1 reports the estimated probabilities, their $95\%$-confidence intervals and the estimated coefficients of variation, that is, the estimated standard deviation divided by the sample mean (based on $200000$ samples), for different values of $x$ and $M$. 
In the last column, we present the results produced with the unbiased algorithm as introduced in Section \ref{SectionTIUE}. 
We can see that, on the one hand the ratio between the estimated probability and the standard deviation stays roughly constant over a range of $x$ values and $M$ values; on the other hand, the estimated probability using the fix truncation method tend to converge to the estimated probability produced with the unbiased algorithm as $M$ grows. 
These observations are consistent with the strong efficiency---predicted in Theorem \ref{sec4thm1} and Theorem \ref{sec6thm1}---of our estimators.

\begin{figure}[]
\centering
\includegraphics[scale=0.5]{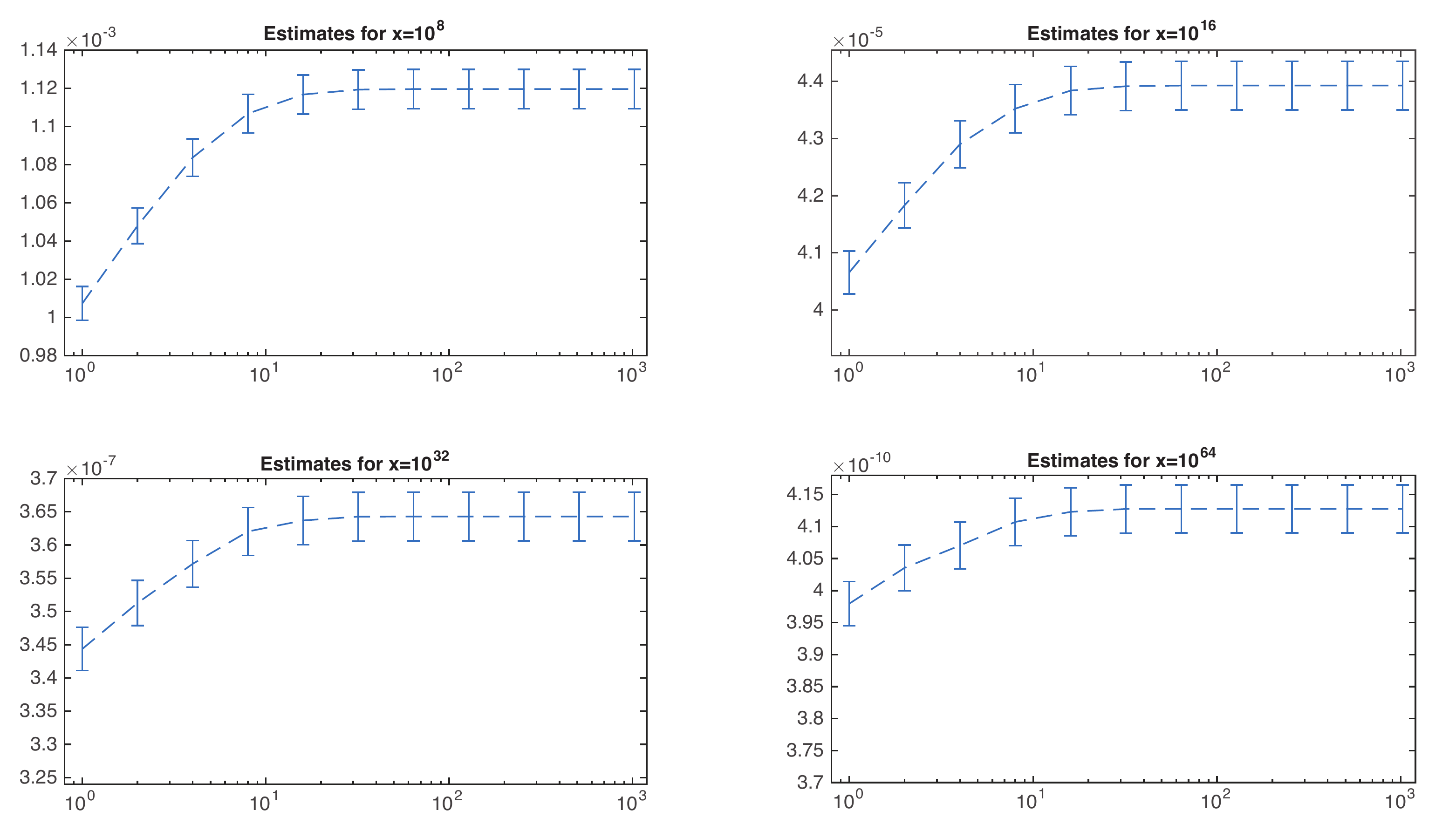}
\caption{Estimated probabilities for changing values of $M$. The $y$-axis values indicate the estimated rare-event probabilities and the vertical bars indicate the $95\%$ confidence intervals. The $x$-axis values indicate the truncation index $M$. In each of the subplot, we can see that as $M$ increases, the estimated probability converges to a fixed value, which suggests that our estimator is consistent with respect to $M$. Comparing the four subplots for different value of $x$, one can see that the relative bias for small $M$ decreases as $x$ grows.}
\label{sec4fig1}
\end{figure}

\begin{table}\label{sec4tab1}
\resizebox{\textwidth}{!}{
\centering
\begin{tabular}{llllll}
\hline
\begin{tabular}[c]{@{}l@{}l@{}}Est\\ CI\\ CV\end{tabular} & $\hspace{8.5pt}M=2^{2}$ & $\hspace{8.5pt}M=2^{4}$ & $\hspace{8.5pt}M=2^{6}$ & $\hspace{8.5pt}M=2^{8}$ & \hspace{8.3pt} RG
\\ \hline
$x=10^{8}$&\begin{tabular}[c]{@{}l@{}l@{}}$\hspace{8.5pt}1.083\times10^{-3}$\\$\pm 0.009\times 10^{-3}$\\ \hspace{8.5pt}$2.06$\end{tabular}&\begin{tabular}[c]{@{}l@{}l@{}}\hspace{8.5pt}$1.117\times10^{-3}$\\$\pm 0.010\times 10^{-3}$\\ \hspace{8.5pt}$2.10$\end{tabular}&\begin{tabular}[c]{@{}l@{}l@{}}\hspace{8.5pt}$1.120\times10^{-3}$\\$\pm 0.010\times 10^{-3}$\\ \hspace{8.5pt}$2.10$\end{tabular}&\begin{tabular}[c]{@{}l@{}l@{}}\hspace{8.5pt}$1.120\times10^{-3}$\\$\pm 0.010\times 10^{-3}$\\ \hspace{8.5pt}$2.10$\end{tabular}&\begin{tabular}[c]{@{}l@{}l@{}}\hspace{8.5pt}$1.119\times10^{-3}$\\$\pm 0.013\times 10^{-3}$\\ \hspace{8.5pt}$2.70$\end{tabular}
\\ \hline
$x=10^{16}$&\begin{tabular}[c]{@{}l@{}l@{}}\hspace{8.5pt}$4.271\times10^{-5}$\\$\pm 0.041\times 10^{-5}$\\ \hspace{8.5pt}$2.17$\end{tabular}&\begin{tabular}[c]{@{}l@{}l@{}}\hspace{8.5pt}$4.373\times10^{-5}$\\$\pm 0.042\times 10^{-5}$\\ \hspace{8.5pt}$2.22$\end{tabular}&\begin{tabular}[c]{@{}l@{}l@{}}\hspace{8.5pt}$4.383\times10^{-5}$\\$\pm 0.043\times 10^{-5}$\\ \hspace{8.5pt}$2.22$\end{tabular}&\begin{tabular}[c]{@{}l@{}l@{}}\hspace{8.5pt}$4.383\times10^{-5}$\\$\pm 0.043\times 10^{-5}$\\ \hspace{8.5pt}$2.22$\\ \end{tabular}&\begin{tabular}[c]{@{}l@{}l@{}}\hspace{8.5pt}$4.375\times10^{-5}$\\$\pm 0.053\times 10^{-5}$\\ \hspace{8.5pt}$2.76$\end{tabular}
\\ \hline
$x=10^{32}$&\begin{tabular}[c]{@{}l@{}l@{}}\hspace{8.5pt}$3.583\times10^{-7}$\\$\pm 0.035\times 10^{-7}$\\ \hspace{8.5pt}$2.25$\end{tabular}&\begin{tabular}[c]{@{}l@{}l@{}}\hspace{8.5pt}$3.646\times10^{-7}$\\$\pm 0.037\times 10^{-7}$\\ \hspace{8.5pt}$2.28$\end{tabular}&\begin{tabular}[c]{@{}l@{}l@{}}\hspace{8.5pt}$3.650\times10^{-7}$\\$\pm 0.037\times 10^{-7}$\\ \hspace{8.5pt}$2.29$\end{tabular}&\begin{tabular}[c]{@{}l@{}l@{}}\hspace{8.5pt}$3.650\times10^{-7}$\\$\pm 0.037\times 10^{-7}$\\ \hspace{8.5pt}$2.29$\end{tabular}&\begin{tabular}[c]{@{}l@{}l@{}}\hspace{8.5pt}$3.663\times10^{-7}$\\$\pm 0.045\times 10^{-7}$\\ \hspace{8.5pt}$2.81$\end{tabular}
\\ \hline
$x=10^{64}$&\begin{tabular}[c]{@{}l@{}l@{}}\hspace{8.5pt}$4.079\times10^{-10}$\\$\pm 0.037\times 10^{-10}$\\ \hspace{8.5pt}$2.05$\end{tabular}&\begin{tabular}[c]{@{}l@{}l@{}}\hspace{8.5pt}$4.120\times10^{-10}$\\$\pm 0.037\times 10^{-10}$\\ \hspace{8.5pt}$2.06$\end{tabular}&\begin{tabular}[c]{@{}l@{}l@{}}\hspace{8.5pt}$4.123\times10^{-10}$\\$\pm 0.038\times 10^{-10}$\\ \hspace{8.5pt}$2.06$\end{tabular}&\begin{tabular}[c]{@{}l@{}l@{}}\hspace{8.5pt}$4.123\times10^{-10}$\\$\pm 0.038\times 10^{-10}$\\ \hspace{8.5pt}$2.06$\end{tabular}&\begin{tabular}[c]{@{}l@{}l@{}}\hspace{8.5pt}$4.115\times10^{-10}$\\$\pm 0.041\times 10^{-10}$\\ \hspace{8.5pt}$2.27$\end{tabular}
\\ \hline
\end{tabular}}
\caption{Estimated rare-event probability and $95\%$ confidence intervals. One can see that, on the one hand the ratio between the standard deviation and the estimated probability stays roughly constant for different combinations of $x$ and $M$, on the other hand, as $M$ grows, the estimated probability produced with the fix truncation method tends to converge to the estimated probability produced with the unbiased algorithm, which suggests the consistency and the strong efficiency of our estimators predicted by Theorem \ref{sec4thm1} and Theorem \ref{sec6thm1}.}
\end{table}

\appendix
\renewcommand{\appendixpagename}{Appendix}
\appendixpage

\section{Proof of Proposition \ref{sec5corol1}}
First we claim that, conditional on $\{\tau_\gamma(x)<\infty\}$, the first term in $\xi_x$ converges to $0$ in probability. Let $\epsilon>0$ be arbitrary, we have that
$$\mathbb{P}\left(\frac{\log{\left(1-\frac{B^\prime_x}{x}\right)}}{a(\log x)}\leq-\epsilon\,\middle|\, \tau_\gamma(x)<\infty\right)=\mathbb{P}\left(\frac{B^\prime_x}{x}\geq 1-x^{-\frac{\epsilon}{\alpha}}\,\middle|\, \tau_\gamma(x)<\infty\right).$$
Let $M_{\tau_\gamma(x)}$ denote the maximum of $\{S_i(\gamma)\}_{i\leq\tau_\gamma(x)-1}$. It is well known that $M_{\tau_\gamma(x)}=\mathcal{O}(1)$  (cf.\ the proof of Theorem 1.1 in \citeNP{asmussenklueppelberg1996}). Moreover, $B^\prime_x$ is bounded by
\begin{align*}
B^\prime_x&=\sum_{n=0}^{\tau_\gamma(x)-1}e^{S_n}=\sum_{n=0}^{\tau_\gamma(x)-1}e^{S_n(\gamma)-n\gamma}\leq e^{M_{\tau_\gamma(x)}}\sum_{n=0}^{\tau_\gamma(x)-1}e^{-n\gamma}\leq e^{M_{\tau_\gamma(x)}}(1-e^{-\gamma})^{-1}.
\end{align*}
Therefore, for $\alpha>0$, we have that
\begin{align}
\nonumber\mathbb{P}\left(\frac{B^\prime_x}{x}\geq1-x^{-\frac{\epsilon}{\alpha}}\,\middle|\, \tau_\gamma(x)<\infty\right)&\leq\mathbb{P}\left(M_{\tau_\gamma(x)}-\log x-\log(1-e^{-\gamma})\geq\log(1-x^{-\frac{\epsilon}{\alpha}})\,\middle|\, \tau_\gamma(x)<\infty\right)\\
\nonumber&=\mathbb{P}\left(\frac{M_{\tau_\gamma(x)}}{a(\log x)}-\frac{\log x}{a(\log x)}-\frac{\log(1-e^{-\gamma})}{a(\log x)}\geq\frac{\log(1-x^{-\frac{\epsilon}{\alpha}})}{a(\log x)}\,\middle|\, \tau_\gamma(x)<\infty\right)\\
&\to0\label{sec4eq9},
\end{align}
as $x\to\infty$, since $\log x/a(\log x)=\alpha$. The convergence of the last two terms in $\xi_x$ is an application of \citeA[Theorem 1.1]{asmussenklueppelberg1996}.

\section{Proof of Lemma \ref{sec6lem1}}
First notice that the numerator in \eqref{sec6eq7} is equal to $\mathbb{E}L_{\tau_\gamma}^{1+\epsilon}(x)$. Analogous to \citeA[Theorem 2]{blanchetglynn2008}, we can derive a similar result for $\mathbb{E}L_{\tau_\gamma}^{1+\epsilon}(x)$ simply by replacing $r$ with $r^{1+\epsilon}$, where
$$r(y,z)=\frac{v(z)}{w(y)},$$
and $v$, $w$ are defined in the corresponding way as in \eqref{sec2eq7}. Based on this observation we claim the following Proposition (cf.\ \citeNP[Proposition 2]{blanchetglynn2008}):

\begin{prop}\label{appendixprop1}
Let $P(y,dz)$ denote the transition kernel of the random walk $\{S_n\}_{n\in\mathbb{N}}$. Assume that $w(y)>0$ for $y\leq s(x)$ and that there exists a finite-valued function $h\,:\mathbb{R}\longrightarrow[\delta_1,\infty)$ satisfying
\begin{equation}\label{appendixeq1}
w^{1+\epsilon}(y)\int v(z)h(z)P(y,dz)\leq h(y)v^{2+\epsilon}(y),
\end{equation}
for $y\leq s(x)$. If $h(z)\geq1$ for $z>s(x)$ and $v(z)\geq\delta_2>0$ for $z>s(x)$, then we have that
$$\mathbb{E}L_{\tau_\gamma}^{1+\epsilon}(x)\leq\delta_1^{-1}\delta_2^{-(2+\epsilon)}v^{2+\epsilon}(y)h(y).$$
\end{prop}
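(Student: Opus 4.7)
The plan is to mimic the supermartingale argument from \citeA[Proposition 2]{blanchetglynn2008}, replacing their per-step likelihood ratio $r(y,z)=v(z)/w(y)$ by $r^{1+\epsilon}(y,z)$ so that the analysis targets the $(1+\epsilon)$-moment instead of the second moment. Under the original measure $P$ (recall that $\mathbb{E}^{Q_{a_*}}L_{\tau_\gamma}^{2+\epsilon}(x)=\mathbb{E}^P L_{\tau_\gamma}^{1+\epsilon}(x)$ by a one-line change-of-measure identity, since $L_{\tau_\gamma}^{2+\epsilon}M_{\tau_\gamma}=L_{\tau_\gamma}^{1+\epsilon}$ with $M_n=\prod v(S_k)/w(S_{k-1})$), I would introduce the Lyapunov-type process
$$
H_n \triangleq h(S_n)\,v^{2+\epsilon}(S_n)\,\prod_{k=1}^n\left(\frac{w(S_{k-1})}{v(S_k)}\right)^{1+\epsilon},\qquad H_0=h(y)\,v^{2+\epsilon}(y),
$$
which is nonnegative since $h,v\geq 0$.

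The first key step is to show that $H_{n\wedge\tau_\gamma(x)}$ is a supermartingale under $P$. A direct computation gives
$$
\mathbb{E}^P[H_n\mid \mathcal{F}_{n-1},\,S_{n-1}=y]=\left(\prod_{k=1}^{n-1}\left(\frac{w(S_{k-1})}{v(S_k)}\right)^{1+\epsilon}\right)\cdot w^{1+\epsilon}(y)\int h(z)\,v(z)\,P(y,dz).
$$
For $n\leq\tau_\gamma(x)$ we have $S_{n-1}\leq s(x)$, so the hypothesis \eqref{appendixeq1} gives the RHS $\leq H_{n-1}$, establishing the stopped supermartingale property. After stopping (for $n>\tau_\gamma(x)$) the process is constant, so the property extends to all $n$.

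Next I would apply Fatou's lemma (which suffices because $H_n\geq 0$ and uniform integrability is not needed) together with the supermartingale inequality to conclude
$$
\mathbb{E}^P[\mathbbm{1}_{\{\tau_\gamma(x)<\infty\}}H_{\tau_\gamma(x)}]\leq \liminf_{n\to\infty}\mathbb{E}^P[H_{n\wedge\tau_\gamma(x)}]\leq H_0 = h(y)\,v^{2+\epsilon}(y).
$$
On $\{\tau_\gamma(x)<\infty\}$ we have $S_{\tau_\gamma(x)}>s(x)$, so by the assumed lower bounds $h(S_{\tau_\gamma(x)})\geq \delta_1$ and $v(S_{\tau_\gamma(x)})\geq \delta_2$ one obtains
$$
H_{\tau_\gamma(x)} \;\geq\; \delta_1\,\delta_2^{2+\epsilon}\,L_{\tau_\gamma}^{1+\epsilon}(x).
$$
Combining the two displays and rearranging yields the claimed inequality.

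The main technical care is in the first step: the hypothesis \eqref{appendixeq1} is only assumed on $\{y\leq s(x)\}$, so the supermartingale property holds only up to the stopping time $\tau_\gamma(x)$; one must therefore work with the stopped process and invoke Fatou (rather than the optional stopping theorem in its classical form) to avoid integrability hypotheses on $\{\tau_\gamma(x)=\infty\}$. The passage $\mathbb{E}^{Q_{a_*}}L_{\tau_\gamma}^{2+\epsilon}=\mathbb{E}^P L_{\tau_\gamma}^{1+\epsilon}$ is routine but is what makes the Lyapunov inequality \eqref{appendixeq1} directly relevant to the moment of interest. The argument goes through verbatim with $v,w$ replaced by their shifted versions $v(\cdot+a_*),w(\cdot+a_*)$, so the result also applies to the shifted estimator used in Lemma \ref{sec6lem1}.
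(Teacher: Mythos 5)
Your proof is correct and is essentially the argument the paper relies on: it is the Lyapunov--supermartingale bound from \citeA[Proposition~2]{blanchetglynn2008}, which the appendix cites without reproducing, with the single-step likelihood ratio raised to the power $1+\epsilon$. The change-of-measure identity $\mathbb{E}^{Q_{a_*}}L_{\tau_\gamma}^{2+\epsilon}=\mathbb{E}^{P}L_{\tau_\gamma}^{1+\epsilon}$, the stopped supermartingale property driven by \eqref{appendixeq1} on $\{y\leq s(x)\}$, the Fatou step to handle $\{\tau_\gamma(x)=\infty\}$, and the boundary bounds $h\geq\delta_1$, $v\geq\delta_2$ at the overshoot all match the intended argument.
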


Recall that $v_\gamma$ defined as in \eqref{sec4eq4} depends implicitly on $x$. The Pakes-Veraverbeke's Theorem implies that $\mathbb{P}(\tau_\gamma(x)<\infty)\sim v_\gamma(y)$ for every fixed $y$ as $x\to\infty$. This observation gives us a way to prove Lemma \ref{sec6lem1}: first find a suitable function $h$ such that \eqref{appendixeq1} holds with $v$ and $w$ being defined as in \eqref{sec4eq1} and \eqref{sec4eq2}, then the result can be yielded using Proposition \ref{appendixprop1}. Define
$$h(y)=\mathbbm{1}_{(-\infty,s(x)-a_*]}(y)+(1-\delta)\mathbbm{1}_{(s(x)-a_*,\infty)}(y).$$
Now \eqref{appendixeq1} is equivalent to (cf.\ the proof of Theorem 3 in \citeNP{blanchetglynn2008})
$$-\delta\leq\frac{v_\gamma^{2+\epsilon}(y+a_*)-w_\gamma^{2+\epsilon}(y+a_*)}{\mathbb{P}(X_1>-y-a_*)w^{1+\epsilon}_\gamma(y+a_*)},\quad \forall y\leq s(x).$$
Using the definition of $w_\gamma$ and noticing the non-negativity of $W_\gamma$, \eqref{sec2eq6} implies particularly that $w(y)-v(y)=\omicron(w(y))$, as $y\to-\infty$. Therefore, such $a_*$ can be found.

\bibliography{bib}
\bibliographystyle{apacite}

\end{document}